\pgfplotsset{compat=1.11}
\newlength{\abstractwidth}
\flushbottom \thispagestyle{empty} \pagestyle{plain}
\renewcommand{\thanks}[1]{\footnote{#1}} 
\newcommand{\be}{\begin{equation}}
\newcommand{\bea}{\begin{eqnarray}}
\newcommand{\eea}{\end{eqnarray}} \newcommand{\ee}{\end{equation}}
 \def\ba{\begin{eqnarray}}
\def\ea{\end{eqnarray}}
\def\det{{\rm det}}
\def\log{\,{\rm log}\,}
\def\[{{\bf [}}
\def\]{{\bf ]}}
\begin{document}
\newtheorem{theorem}{Theorem}
\newtheorem{proposition}{Proposition}
\newtheorem{lemma}{Lemma}
\newtheorem{corollary}{Corollary}
\newtheorem{definition}{Definition}
\newtheorem{conjecture}{Conjecture}
\newtheorem{example}{Example}
\newtheorem{claim}{Claim}
\newtheorem{remark}{Remark}

\begin{centering}
 
\textup{\LARGE\bf Inverse Mean Curvature Flow over \\ \vspace{0.3cm} Non-Star-Shaped Surfaces}

\vspace{10 mm}

\textnormal{\large Brian Harvie}

\vspace{.5 in}
\begin{abstract}
{\small

We derive an upper bound on the waiting time for a variational weak solution to Inverse Mean Curvature Flow in $\mathbb{R}^{n+1}$ to become star-shaped. As a consequence, we demonstrate that any connected surface moving by the flow which is not initially a topological sphere develops a singularity or self-intersection within a prescribed time interval depending only on initial data. Finally, we establish the existence of either finite-time singularities or intersections for certain topological spheres under IMCF.}

\end{abstract}

\end{centering}

\begin{normalsize}

\section{Introduction}

Inverse mean curvature flow (IMCF) has proven to be an important tool in modern geometric analysis. Given a closed oriented manifold $N^{n}$, we say that a smooth one-parameter family of immersions $F_{t}: N^{n} \times [0,T) \rightarrow \mathbb{R}^{n+1}$ is a \textit{classical solution of inverse mean curvature flow} if

\begin{equation} \label{IMCF}
    \frac{\partial}{\partial t} F_{t}(p) = \frac{1}{H} \nu(p,t), \hspace{0.5cm} p \in N^{n}, \hspace{0.5cm} 0 \leq t \leq T.
\end{equation}%

\noindent where $H(p,t) >0$ and $\nu(p,t)$ are the mean curvature and outward unit normal of the surface $N_{t} = F_{t}(N)$ at the point $F_{t}(p)$. Although one may consider a solution of \eqref{IMCF} for hypersurfaces with boundary, c.f. \cite{Marquardt2012TheIM}, we will restrict ourselves to considering a closed manifold $N^{n}$ throughout this note.

Purely geometric applications of this flow include a proof of the Minkowski Inequality $\int_{N} H d\mu \geq 4 \pi |N|$ for outward-minimizing hypersurfaces $N \subset \mathbb{R}^{n+1}$ in \cite{huisken3} and a proof of the Poincare conjecture for manifolds with Yamabe Invariant greater than that of $\mathbb{RP}^{3}$ in \cite{Bray2004ClassificationOP}. Additionally, IMCF has been used in recent decades to solve mathematical problems in general relativity: the highest-profile of these applications has been the use of weak solutions of \eqref{IMCF} developed in \cite{huisken} to prove the Riemannian Penrose Inequality, and these solutions have subsequently been used to derive a number of other geometric inequalities in relativity (\cite{brendle}, \cite{mccormick}, \cite{wei} ).

One of the most natural questions about classical IMCF is: what conditions can one impose on a mean-convex surface $N_{0}$ to guarantee global existence? Gerhardt originally answered this question in \cite{gerhardt}, where he showed that solutions to \eqref{IMCF} are smooth, exist for all time, and homothetically approach round spheres as $t \rightarrow \infty$ if $N_{0}$ is star-shaped. Additional existence and regularity results both in Euclidean space and more exotic Riemannian manifolds typically involve obtaining first-order estimates on the support function $\omega= \langle \nu, \partial_{r} \rangle$ before obtaining second-order estimates on the second fundamental form $A$. Such approaches require $\omega$ be initially non-negative, i.e. that $N_{0}$ be star-shaped, in order to apply the appropriate maximum principles. For this reason, most literature on classical IMCF requires some star-shapedness assumption on $N_{0}$.

It is known that in general solutions to \eqref{IMCF} do not exist for all time in the non-star-shaped case. For example, a thin torus in $\mathbb{R}^{n+1}$ moving by IMCF will fatten up until the mean curvature over the inner ring reaches zero, thereby terminating the flow in finite time. With this in mind, one asks if finite-time singularities may also happen for topological spheres. As a first step toward answering these questions, we prove several results in this paper related to global existence as well as the formation of singularities and self-intersections for IMCF.

In Section 2, we demonstrate that the variational weak solutions to the flow first introduced in \cite{huisken} respect a reflection property first proposed by Chow and Gulliver in \cite{chow}. We use this property to conclude that these solutions must be star-shaped by the time they lie entirely outside of the smallest sphere they are initially enclosed by. This implies an upper bound on the ``waiting time" for a variational solution to become star-shaped depending only on the inradius and outradius of the initial surface.

Section 3 concerns the applications of this waiting time result to classical solutions. We show that, assuming initial connectedness, a classical solution defines a weak solution in the sense of \cite{huisken} if and only if it remains embedded. Using this result, we then establish a correspondence between the global variational solution of IMCF and the one defined by the classical solution. This correspondence reveals that embedded solutions to \eqref{IMCF} which exist for twice the afforementioned waiting time exist globally and homothetically converge to spheres. It also implies that all initial surfaces which are not topological spheres must either cease to be embedded or develop a finite-time singularity within twice the waiting time.

Since all initial surfaces without spherical topology must develop intersections or singularities, one naturally asks if the converse is true. That is, does the solution to \eqref{IMCF} always exist globally and remain embedded if $N_{0}$ has spherical topology? In Section 4, we show this not to be the case by constructing a mean convex with spherical topology $\mathbb{S}^{n}$ which is not outward minimizing. In particular, the corresponding weak solution must ``jump" at the initial time before the classical solution either terminates or self intersects. All results hold in any dimension. \\

\section*{Acknowledgements}
I would like to thank my thesis advisor Adam Jacob, whose weekly discussions with me made the completion of this project possible, the University of California, Davis Department of Mathematics for their financial support throughout my graduate studies, and the referee for a number of helpful comments.

\section{An Aleksandrov Reflection Approach to Level Set Solutions}

In the following section, we will demonstrate that solutions to classical IMCF respect a reflection property first explored in \cite{chow}, where Chow and Gulliver show an identical property for viscosity solutions of flows with normal speed non-decreasing in each principal curvature. Our approach applies this ``moving plane'' approach for the type of weak solution of the flow first detailed in \cite{huisken}. We begin by discussing the nature of these solutions. 

Suppose a solution $\{N_{t}\}_{0 \leq t < T}$ to \eqref{IMCF} foliates its image, that is, $N_{t_{1}} \cap N_{t_{2}} = \varnothing$ for $t_{1} \neq t_{2}$. Then it is possible to define a function $u: U= \cup_{t \in [0,T)} N_{t} \subset \mathbb{R}^{n+1} \rightarrow \mathbb{R}$ over the foliated region by $u(x)=t$ for $x \in N_{t}$ (Note this is not well defined if $x \in N_{t_{1}} \cap N_{t_{2}}$ for $t_{1} \neq t_{2}$). One can then verify that this $u$ solves the following degenerate elliptic Dirichlet problem:

\begin{eqnarray}\label{weak}
\text{div}(\frac{\nabla u}{|\nabla u|}) &=& |\nabla u| \text{       in       } U, \\
u|_{N_{0}} &=& 0. \nonumber
\end{eqnarray}

 With the level set function $u$ in mind, Huisken and Ilmanen in \cite{huisken} developed a notion of variational solutions to \eqref{weak}, and the comparison principle we shall utilize in this section crucially applies to these. 
 
 \begin{definition} \thlabel{variational}
 Given an open set $U \subset \mathbb{R}^{n+1}$, a function $u \in C^{0,1}_{\text{loc}} (U)$ is a \textbf{variational solution to IMCF} if for any $K \subset \subset U$ and $v \in C^{0,1}(K)$ with $\{ v \neq u \} \subset \subset K$ we have
 
 \begin{equation*} 
 J_{K}(u,u) \leq J_{K}(u,v)
 \end{equation*}
 
 where $J_{K}$ is the functional defined by
 
 \begin{equation} \label{min}
     J_{K}(u,v)= \int_{K} |\nabla u| + u|\nabla v|.
 \end{equation}
 
 Furthermore, given an open, bounded subset $E_{0} \subset \mathbb{R}^{n+1}$, a function $u: \mathbb{R}^{n+1} \rightarrow \mathbb{R}$ is a \textbf{variational solution to IMCF with initial condition $\mathbf{E_{0}}$} if $E_{0}=\{ u < 0\}$ and $u$ minimizes \eqref{min} on $U=\mathbb{R}^{n+1} \setminus E_{0}$.
 \end{definition}
 
 By picking the appropriate one-parameter family of test-functions, one can verify that a $C^{2}$ function $u: U \rightarrow \mathbb{R}$ with nonvanishing gradient which minimizes $J_{K}$ must satisfy $|\nabla u (x)|=H(x)$, where $H(x)$ is the mean curvature of the level set of $u$ at $x$. Thus every solution to \eqref{weak} over some open set $U$ minimizes \eqref{min} over $U$. For general variational solutions, however, there may exist points where $\nabla u =0$, and the presence of regions where the gradient of a solution vanishes also allows for the presence of points where it is not differentiable. 
 
 Huisken and Ilmanen nevertheless demonstrated that, given any open set $E_{0} \subset \mathbb{R}^{n+1}$ with $C^{1}$ boundary there is a unique variational solution $u$ with initial condition $E_{0}$ for which the sets $\{ u < t\}$ are precompact for each $t$ (Notice that if $\partial E_{0}= N_{0}$ then this means $u|_{N_{0}}=0$), and all of our results in this section apply specifically to these solutions. Key to our approach is a comparison principle from \cite{huisken} which applies to any variational solution $u$. More specifically, given any locally Lipchitz $u$ and $v$ which solve $\eqref{weak}$ over some open $U$, we know that if $\{ u < 0\} \subset \{ v < 0 \}$ then $\{ u < t \} \subset \{v < t \}$ for each $t \in \mathbb{R}$ on $U$, provided the level sets of $v$ are precompact in $\mathbb{R}^{n+1}$. Let us now give a few more definitions neccessary for our moving plane approach.
 
 Consider the plane $P_{\lambda, \nu} = \{ x \in \mathbb{R}^{n} | \langle x, \nu \rangle = \lambda \}$ with unit normal vector $\nu \in \mathbb{S}^{n}$ and upper and lower half-spaces $H^{+}_{\lambda, \nu}= \{ x \in \mathbb{R}^{n} | \langle x, \nu \rangle > \lambda \}$ and $H^{-}_{\lambda, \nu}= \{ x \in \mathbb{R}^{n} | \langle x, \nu \rangle < \lambda \}$ respectively. Let $N_{\lambda, \nu}: \mathbb{R}^{n+1} \rightarrow \mathbb{R}^{n+1}$ denote the reflection about $P_{\lambda, \nu }$. \\
 
 \begin{definition}
Given a subset $E \subset \mathbb{R}^{n+1}$, we say that $P_{\lambda, \nu}$ is \textbf{admissible with respect to $\mathbf{E}$} if $N_{\lambda, \nu} (E \cap H^{-}_ {\lambda, \nu }) \subset E \cap H^{+}_{\lambda, \nu}$.
\end{definition}

Our first result concerns the admissibility of the flow surfaces of IMCF. Given a plane $P_{\lambda, \nu}$ with corresponding reflection $N_{\lambda, \nu}: \mathbb{R}^{n+1} \rightarrow \mathbb{R}^{n+1}$, first note that if $u$ solves \eqref{weak}, then so does $u^{*}(x)= u \circ N_{\lambda, \nu} (x)$ since $N_{\lambda, \nu}$ is an isometry of $\mathbb{R}^{n+1}$. Let $E_{t}= \{ x \in \mathbb{R}^{n+1} | u(x) < t \}$ and $E^{*}_{t}= \{ x \in \mathbb{R}^{n+1} | u^{*}(x) < t \}$.

\begin{proposition}\thlabel{comparison}
For some bounded, open $E_{0} \subset \mathbb{R}^{n+1}$ with $C^{1}$ boundary, let $u: \mathbb{R}^{n+1} \rightarrow \mathbb{R}$ be the variational solution to IMCF with initial condition $E_{0}$ such that $\{ u < t \}$ is precompact for each $t$. If $E^{*}_{0} \cap H^{+}_{\lambda, \nu} \subset E_{0} \cap H^{+}_{\lambda, \nu}$, then $u^{*} (x) \geq u(x)$ for every $x \in H^{+}_{\lambda, \nu}$. In particular, $E^{*}_{t} \cap H^{+}_{\lambda, \nu} \subset E_{t} \cap H^{+}_{\lambda, \nu}$ for every $t>0$.
\end{proposition}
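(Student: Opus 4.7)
The plan is to apply the variational comparison principle from \cite{huisken} (quoted in the discussion preceding this proposition) on the open set $U = H^+_{\lambda, \nu}$, taking $u^*$ as the solution with the smaller initial datum and $u$ as the one with the larger. If the hypotheses of that principle are met, it produces $\{u^* < t\} \cap H^+_{\lambda, \nu} \subset \{u < t\} \cap H^+_{\lambda, \nu}$ for every $t$, which is precisely the pointwise bound $u^* \geq u$ on $H^+_{\lambda, \nu}$ and, as a rewrite, the claimed inclusion of the sublevel sets.

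Verifying the hypotheses is a matter of assembling observations already in the text. The author notes just before the statement that $u^*$ is a variational solution of \eqref{weak} on $\mathbb{R}^{n+1}$ with initial condition $E_0^*$ since $N_{\lambda, \nu}$ is an isometry; this amounts to a change-of-variables identity that transports the minimization property of Definition \ref{variational} from $u$ to $u^*$. Both $u$ and $u^*$ therefore restrict to variational solutions of \eqref{weak} on $U$; the initial containment $\{u^* < 0\} \cap U = E_0^* \cap H^+_{\lambda, \nu} \subset E_0 \cap H^+_{\lambda, \nu} = \{u < 0\} \cap U$ is precisely the hypothesis of the proposition; and the required precompactness of the sublevel sets of $u$ in $\mathbb{R}^{n+1}$ is part of the setup.

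The point I expect to require the most care is confirming that the comparison principle truly applies on the unbounded half-space $U = H^+_{\lambda, \nu}$ and not only on all of $\mathbb{R}^{n+1}$. Globally the hypothesis does not extend --- reflecting it across $P_{\lambda, \nu}$ yields the opposite inclusion $E_0 \cap H^-_{\lambda, \nu} \subset E_0^* \cap H^-_{\lambda, \nu}$ on the complementary half-space --- so the global version cannot be invoked directly. What makes the one-sided version go through is that every point of $\partial U = P_{\lambda, \nu}$ is fixed by $N_{\lambda, \nu}$, so $u = u^*$ identically on $P_{\lambda, \nu}$: the standard Huisken--Ilmanen competitor obtained by replacing $u$ with $\min(u, u^*)$ on the modification region $\{u > u^*\} \cap H^+_{\lambda, \nu} \cap \{u < T\}$ (for each large truncation level $T$) agrees with the unchanged solution both on the interior boundary of this region and on $P_{\lambda, \nu}$, giving an admissible compactly supported perturbation. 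The usual estimate on $J_K$ with this perturbation then forces $\{u > u^*\} \cap H^+_{\lambda, \nu}$ to be empty and delivers the proposition.
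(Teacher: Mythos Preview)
Your proposal is correct and follows essentially the same route as the paper: truncate at a level $t$, use that $u=u^{*}$ on $P_{\lambda,\nu}$ together with the initial inclusion to force the set $\{u>u^{*}\}$ (suitably cut off) to be compactly contained in the half-space, and invoke the Huisken--Ilmanen comparison principle (Theorem~2.2(i) of \cite{huisken}). The paper carries out the truncation by replacing $u$ with the solution $u^{t}=\min\{u,t\}$ and working on $(E^{*}_{t}\setminus E_{0})\cap H^{+}_{\lambda,\nu}$, which makes the boundary check at the outer level set $\{u^{*}=t\}$ explicit; your sketch leaves that particular boundary piece implicit, but the argument is the same.
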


\begin{remark}
If $N_{t}$ is a classical solution to IMCF, then $E_{t}$ corresponds to the region enclosed by $N_{t}$. Then this theorem implies for classical solutions that if for a particular plane the portion of $N_{0}$ in the lower half-plane reflected into the upper half-plane lies inside the portion already within the upper half-plane, then this remains true for each $N_{t}$.
\end{remark}

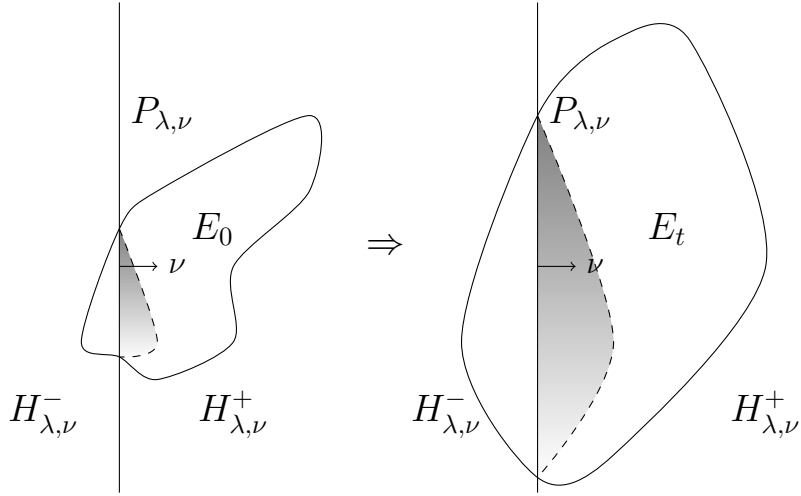
\begin{figure}
\centering
\begin{tikzpicture}
\tikzset{
    show curve controls/.style={
        decoration={
            show path construction,
            curveto code={
                \draw [blue, dashed]
                    (\tikzinputsegmentfirst) -- (\tikzinputsegmentsupporta)
                    node [at end, cross out, draw, solid, red, inner sep=2pt]{};
                \draw [blue, dashed]
                    (\tikzinputsegmentsupportb) -- (\tikzinputsegmentlast)
                    node [at start, cross out, draw, solid, red, inner sep=2pt]{};
            }
        }, decorate
    }
}

\draw [black] plot [smooth cycle] coordinates {(0,1) (-0.5,0.5) (-1,-1) (-0.5, -1.2) (0, -1.5) (1,-1) (1,0) (2,1) (2,2)};

\shadedraw [dashed] plot [smooth] coordinates {(-0.5,0.5) (0,-1) (-0.5,-1.2)};
\draw (-0.5,-3) -- (-0.5,3.5);
\draw[->] (-0.5, 0) -- (0,0);
\draw (0,0) node[anchor=west]{$\nu$};
\draw (-0.5,2) node[anchor=west] {\large{$P_{\lambda, \nu}$}};
\draw (0.3,0.5) node[anchor=west] {\large{$E_{0}$}};
\draw (1,-1.5) node[anchor=north] {\large{$H^{+}_{\lambda, \nu}$}};
\draw (-1.5,-1.5) node[anchor=north] {\large{$H^{-}_{\lambda, \nu}$}};

\draw (3,0.5) node[anchor=north] {\large{$\Rightarrow$}};
\draw [black] plot [smooth cycle] coordinates {(4,-1) (5,-2.8) (6,-2.5) (8, 0) (7,3) (6,3) (5,2)};
\shadedraw[dashed] plot [smooth] coordinates {(5,2) (6,-1) (5,-2.8)};
\draw (5, -3) -- (5, 3.5);
\draw (3.8,-1.5) node[anchor=north] {\large{$H^{-}_{\lambda, \nu}$}};
\draw (8,-1.5) node[anchor=north] {\large{$H^{+}_{\lambda, \nu}$}};
\draw (5,2) node[anchor=west] {\large{$P_{\lambda, \nu}$}};
\draw[->] (5,0) -- (5.5,0);
\draw (5.5,0) node[anchor=west]{$\nu$};
\draw (6.3,0.5) node[anchor=west] {\large{$E_{t}$}};
\end{tikzpicture}

\caption{Given a set $E_{0}$ which is initially admissible with respect to some plane, the corresponding solution $\{E_{t}\}_{0 \leq t \leq T}$ to weak IMCF remains admissible for every $t$.}
\end{figure}

\begin{proof}

From Remark 1.18 in \cite{huisken}, if $u$ is a solution of \eqref{weak} over $\mathbb{R}^{n+1}$, then so is $\min\{u,c\}$ for any constant $c \in \mathbb{R}^{+}$. Then for some $t \in \mathbb{R}^{+}$, consider the set $U= (E^{*}_{t} \setminus E_{0}) \cap H^{+}_{\lambda, \nu}$ and the cut-off solution $u^{t}=\text{min}\{ u, t \}$ to \eqref{weak}. We claim that $\{ u^{t} > u^{*} + \delta \} \subset \subset U$ for every $\delta >0$. 

Observe that $\partial U \subset ((u^{*})^{-1}\{t\} \cap H^{+}_{\lambda, \nu}) \cup (N_{0} \cap H^{+}_{\lambda, \nu}) \cup P_{\lambda, \nu}$. Since $u^{t} \leq t$, we have $u^{t} \leq u + \delta$ near $(u^{*})^{-1}\{t\} \cap H^{+}_{\lambda, \nu}$.  Since $E^{*}_{0} \cap H^{+}_{\lambda, \nu} \subset E_{0} \cap H^{+}_{\lambda, \nu}$, we have $u^{*}(x) \geq 0$ and therefore $u^{*}(x) + \delta \geq \delta \geq u(x) \geq u^{t} (x)$ near $N_{0} \cap H^{+}_{\lambda, \nu}$. Finally, since $u^{*}(x)=u(x)$ on $P_{\lambda, \nu}$, we have $u^{*}(x) + \delta \geq u(x)$ near $P_{\lambda, \nu}$. Then we may conclude that $\{ u^{t} > u^{*} + \delta \} \subset \subset U$, meaning by Theorem 2.2(i) in \cite{huisken} we get $u^{t} \leq u^{*} + \delta$ in $U$, implying $u^{t} \leq u^{*}$ in $U$. But since $u^{*}<t$ in $U$ we have $u=u^{t} \leq u^{*}$ in $U$. Since $H^{+}_{\lambda, \nu} \cap U$ is foliated by such $W$, we may conclude $u^{*}(x) \geq u(x)$ over $H^{+}_{\lambda, \nu} \cap U$. Then $E^{*}_{t} \cap H^{+}_{\lambda, \nu} \subset E_{t} \cap H^{+}_{\lambda, \nu}$, so $P_{\lambda, \nu}$ is admissible for every $E_{t}$.
\end{proof}

\begin{corollary}\thlabel{nonincreasing}
Let $E_{0}$, $u$ be as in Proposition 1. Suppose $P_{\tilde{\lambda}, \nu}$ be admissible with respect to $E_{0}$ for every $\tilde{\lambda} \in (-\infty, \lambda)$. Then $u(x)$ is nonincreasing in the $\nu$ direction over $H^{-}_{\lambda, \nu}$. 
\end{corollary}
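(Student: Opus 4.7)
The plan is to reduce the monotonicity statement to a direct application of Proposition \ref{comparison} by exhibiting, for any two points related by translation in the $\nu$ direction, a reflection plane that maps one onto the other and that is covered by the admissibility hypothesis.

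Concretely, take any $x_1, x_2 \in H^{-}_{\lambda, \nu}$ with $x_2 = x_1 + s\nu$ for some $s > 0$; I want to show $u(x_1) \geq u(x_2)$. Set $\tilde{\lambda} = \tfrac{1}{2}\bigl(\langle x_1, \nu\rangle + \langle x_2, \nu\rangle\bigr)$. Because both points lie in $H^{-}_{\lambda,\nu}$ their $\nu$-coordinates are strictly less than $\lambda$, so $\tilde{\lambda} \in (-\infty, \lambda)$ and the hypothesis gives that $P_{\tilde{\lambda},\nu}$ is admissible with respect to $E_0$. By construction, $N_{\tilde{\lambda},\nu}(x_1) = x_2$, with $x_1 \in H^{-}_{\tilde{\lambda},\nu}$ and $x_2 \in H^{+}_{\tilde{\lambda},\nu}$.

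Now apply Proposition \ref{comparison} with $\tilde{\lambda}$ in place of $\lambda$: admissibility of $P_{\tilde{\lambda},\nu}$ for $E_0$ is exactly the hypothesis $E_0^{*} \cap H^{+}_{\tilde{\lambda},\nu} \subset E_0 \cap H^{+}_{\tilde{\lambda},\nu}$, so we conclude $u^{*}(y) \geq u(y)$ for all $y \in H^{+}_{\tilde{\lambda},\nu}$, where $u^{*} = u \circ N_{\tilde{\lambda},\nu}$. Evaluating at $y = x_2$ and using $N_{\tilde{\lambda},\nu}(x_2) = x_1$ gives
\begin{equation*}
u(x_1) = u\bigl(N_{\tilde{\lambda},\nu}(x_2)\bigr) = u^{*}(x_2) \geq u(x_2),
\end{equation*}
which is precisely the desired monotonicity.

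There is no real obstacle here; the content lies entirely in Proposition \ref{comparison}, and the corollary is essentially a packaging statement. The one point requiring minor care is verifying that the chosen midpoint $\tilde{\lambda}$ falls in the open interval $(-\infty, \lambda)$ so that admissibility is available, which follows immediately from both $x_1$ and $x_2$ lying strictly inside $H^{-}_{\lambda,\nu}$. Since $x_1$ and $x_2$ were arbitrary points in $H^{-}_{\lambda,\nu}$ differing by a positive multiple of $\nu$, this establishes that $u$ is nonincreasing in the $\nu$ direction throughout $H^{-}_{\lambda,\nu}$.
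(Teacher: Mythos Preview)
Your proof is correct and follows essentially the same approach as the paper: pick the bisecting plane $P_{\tilde{\lambda},\nu}$ of the two points, note $\tilde{\lambda}<\lambda$ so admissibility applies, and invoke Proposition~\ref{comparison} at the point lying in the upper half-space. The only difference is cosmetic---the paper labels the points so that $s_2<s_1$ and concludes $u(x_2)\ge u(x_1)$, whereas you swap the roles---so the arguments are the same.
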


\begin{proof}
Take $x_{1}, x_{2} \in H^{-}_{\lambda, \nu}$ which lie on the same line perpendicular to $P_{\lambda, \nu}$, i.e. $x_{1}=s_{1}\nu + y$ and $x_{2}=s_{2}\nu + y$ for $y \in P_{\lambda, \nu}$. Without loss of generality, say $s_{2} < s_{1} < 0$. 

Let $P_{\tilde{\lambda}, \nu}$ be the plane parallel to $P_{\lambda, \nu}$ which bisects $x_{1}$ and $x_{2}$. Note then that $P_{\tilde{\lambda}, \nu}$ is admissible with respect to $N_{0}$ since $\tilde{\lambda} < \lambda$. Then by \thref{comparison}, we have that $u^{*}(x)= u \circ \tilde{N}_{\lambda, \nu} (x) \geq u(x)$ for every $x \in \tilde{H}^{+}_{\lambda, \nu}$. In particular, since $x_{1} \in \tilde{H}^{+}_{\lambda, \nu}$, we must have $u^{*}(x_{1}) \geq u(x_{1})$. But $u^{*}(x_{1})=u \circ \tilde{N}_{\lambda, \nu}(x_{1})=u(x_{2})$, so $u(x_{2}) \geq u(x_{1})$. 
\end{proof}

Now we may use this result to represent the part of the surface in the lower half-plane as a locally Lipschitz graph. For the purpose of extending these results to weak solutions, we also prove this for the boundary of $E^{+}_{t}= \text{Int}(\{ x \in \mathbb{R}^{n} | u(x) \leq t \})$.

\begin{proposition}\thlabel{lipschitz}
Let $u$, $E_{0}$ be as Proposition 1. For a given $\lambda \in \mathbb{R}, \nu \in \mathbb{S}^{n}$, suppose for some $\epsilon >0$ that $P_{\tilde{\lambda}, \tilde{\nu}}$ is admissible with respect to $E_{0}$ for every $\tilde{\lambda} \in (-\infty, \lambda)$ and $\tilde{\nu}$ with $|\tilde{\nu}-\nu| < \epsilon$. Then $\partial E_{t} \cap H^{-}_{\lambda, \nu}$ and $\partial E^{+}_{t} \cap H^{-}_{\lambda, \nu}$ are each locally Lipschitz graphs in the $\nu$ direction over $P_{\lambda, \nu}$. 
\end{proposition}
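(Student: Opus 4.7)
The plan is to upgrade Corollary \thref{nonincreasing}, applied separately to each $\tilde\nu$ near $\nu$, into a uniform cone-monotonicity condition on the sublevel sets of $u$, from which the Lipschitz-graph conclusion will follow by a standard geometric argument. Fix $x_0 \in \partial E_t \cap H^{-}_{\lambda,\nu}$. Since $x_0$ lies at positive distance $d_0 > 0$ from $P_{\lambda,\nu}$, I can choose some $\epsilon' \in (0,\epsilon)$ and a ball $V = B_r(x_0)$ of radius $r \ll d_0$ so that for every $x \in V$ and every unit vector $\tilde\nu$ with $|\tilde\nu - \nu| < \epsilon'$, the segment $x + [0, 2r]\tilde\nu$ lies entirely inside $H^{-}_{\lambda,\tilde\nu}$. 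Corollary \thref{nonincreasing} applied with normal $\tilde\nu$ then supplies $u(x + s\tilde\nu) \leq u(x)$ whenever $x, x + s\tilde\nu \in V$ and $|\tilde\nu - \nu| < \epsilon'$.

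Let $K := \{s\tilde\nu : s > 0,\, \tilde\nu \in \mathbb{S}^n,\, |\tilde\nu-\nu| < \epsilon'\}$, an open cone containing $\nu$ in its interior with a positive opening half-angle $\theta(\epsilon')$ about the $\nu$-axis. The previous step then rephrases as: whenever $x, x+h \in V$ with $h \in K$, one has $u(x+h) \leq u(x)$. Consequently $E_t \cap V$ is up-closed under $K$: if $x \in E_t$ and $x+h \in V$ with $h \in K$, then $u(x+h) \leq u(x) < t$, so $x+h \in E_t$. It is a standard geometric consequence that the boundary of a set which is up-closed under an open cone with axis $\nu$ and opening half-angle $\theta > 0$ is locally the graph of a $\cot(\theta)$-Lipschitz function over $\nu^{\perp}$ in the $\nu$ direction; applied inside $V$, this yields the conclusion for $\partial E_t \cap V$.

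For $\partial E^{+}_t$, observe that by continuity of $u$ the closed set $\{u \leq t\}\cap V$ is also up-closed under $K$, and a short check shows that the interior of a set up-closed under an open cone is itself up-closed: for $x \in \text{Int}\{u \leq t\}$ with $B_\rho(x) \subset \{u \leq t\}$ and $h \in K$ sufficiently small, each $z \in B_\rho(x+h)$ satisfies $z = (z-h)+h$ with $z-h \in B_\rho(x)$, whence $u(z) \leq u(z-h) \leq t$, so $B_\rho(x+h) \subset \{u \leq t\}$. Hence $E^{+}_t \cap V$ is up-closed under $K$ and the same Lipschitz-graph argument applies.

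The main technical step is the uniform choice of $\epsilon'$ and $V$ in the first paragraph, which ensures that a single cone $K$ governs monotonicity at every point of $V$; once that is in place, the reflection corollary does the real work and the Lipschitz graph property is a direct geometric consequence. Importantly, no strict monotonicity of $u$ is needed, because $E_t$ is defined by the strict inequality $u < t$, which turns the non-strict monotonicity $u(x+h) \leq u(x)$ into the required open cone condition.
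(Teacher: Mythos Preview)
Your argument is correct and follows essentially the same route as the paper: both apply Corollary~\thref{nonincreasing} to every direction $\tilde\nu$ near $\nu$ to obtain cone-monotonicity of $u$ in a neighborhood of the given boundary point, and then extract the Lipschitz-graph conclusion from that. The only difference is packaging: the paper writes out the last step as an explicit contradiction giving $|s_1-s_2|\le \cot\hat\epsilon\,|y_1-y_2|$ for any two boundary points, whereas you invoke it as the standard fact that the boundary of a set up-closed under an open cone is a Lipschitz graph over the cone's axis.
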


\begin{proof}
We prove the result for $\partial E^{+}_{t} \cap H^{-}_{\lambda, \nu}$, as the proof for $\partial E_{t} \cap H^{-}_{\lambda, \nu}$ is identical. We begin by noting that $u$ is nonincreasing in the $\tilde{\nu}$ direction over $H_{\lambda, \tilde{\nu}}$ for every $\tilde{\nu}$ with $|\tilde{\nu} - \nu| < \epsilon$ by \thref{nonincreasing}.

Fix $x_{1}, x_{2} \in \partial E^{+}_{t} \cap H^{-}_{\lambda, \nu}$. Write $x_{1}=s_{1}\nu + y_{1}$, $x_{2}=s_{2}\nu + y_{2}$ for $y_{1}, y_{2} \in P_{\lambda, \nu_{0}}$, and say without loss of generality that $s_{1} \leq s_{2} < 0$. There exists $\tilde{\epsilon}$ so that $x_{1}, x_{2} \in H^{-}_{\lambda, \tilde{\nu}}$ for every unit vector $\tilde{\nu} \in \mathbb{S}^{n}$ satisfying $|\tilde{\nu} - \nu| < \epsilon$. Define $\hat{\epsilon}=\text{min}\{ \epsilon, \tilde{\epsilon} \}$. We will show that 
\begin{equation} \label{bound}
    |s_{1}-s_{2}| \leq \cot \hat{\epsilon} |y_{1} - y_{2}|.
\end{equation} 

To see this, suppose that \eqref{bound} is false. Then for the unit vector 

\begin{equation}
    \hat{\nu} = \frac{x_{2} - x_{1}}{|x_{2} - x_{1}|}
\end{equation}

we must have

\begin{equation}
    \langle \hat{\nu}, \nu \rangle= \frac{\langle y_{2} - y_{1}, \nu \rangle + (s_{2} - s_{1}) }{\sqrt{|y_{1} - y_{2}|^{2} + (s_{1}-s_{2})^{2}}} > \frac{1}{\sqrt{\tan^{2} \hat{\epsilon} + 1}} = \cos \hat{\epsilon}.
\end{equation}

Now, pick $\tilde{x_{2}}$ with $u(\tilde{x_{2}})> t$ sufficiently close to $x_{2}$ so that for the vector $\tilde{\nu}= \frac{\tilde{x_{2}} - x_{1}}{|\tilde{x_{2}} - x_{1}|}$ we have $\langle \tilde{\nu}, \nu \rangle > \cos(\hat{\epsilon})$ and $\langle \tilde{x}_{2}, \tilde{\nu} \rangle < \lambda$. Note that the first inequality implies $|\tilde{\nu} - \nu| < \hat{\epsilon}$. Then $P_{\lambda, \tilde{\nu}}$ is admissible with respect to $N_{0}$, and $x_{1}, \tilde{x}_{2}$ lie in $H^{-}_{\lambda, \tilde{\nu}}$. In fact, we have that $x_{1}, \tilde{x}_{2}$ lie on a line perpendicular to $P_{\lambda, \tilde{\nu}}$ with $\text{dist}\{x_{1}, P_{\lambda, \tilde{\nu}}\} > \text{dist}\{ \tilde{x}_{2}, P_{\lambda, \tilde{\nu}} \}$ by construction. But we also have that $u(x_{1})=t$ and $u(\tilde{x}_{2}) > t$, and this contradicts the nonincreasing property from \thref{nonincreasing}. 

Thus \eqref{bound} holds, and therefore $y_{1}=y_{2}$ implies $s_{1}=s_{2}$, so $\partial E^{+}_{t} \cap H^{-}_{\lambda, \nu}$, and likewise $\partial E_{t} \cap H^{-}_{\lambda, \nu}$, is a graph over $P_{\lambda, \nu}$ (Recall $\partial E_{t} = N_{t}$ for classical solutions). Furthermore, the Lipschitz bound $\cot \hat{\epsilon}$ is independent of $t$. 
\end{proof}

\begin{theorem} \thlabel{graph}
For some bounded, open $E_{0}$ with $C^{1}$ boundary, let $u: \mathbb{R}^{n+1} \rightarrow \mathbb{R}$ be the variational solution to IMCF with initial condition $E_{0}$ such that $\{ u < t \}$ is precompact for each $t$. Then, choosing $0 \in \mathbb{R}^{n+1}$ to be the midpoint of the two furthest points apart on $\partial E_{0}$, the region of the surface $\partial E_{t}$ which lies outside $B_{\frac{\text{diam}(N_{0})}{2}}(0)$ can be written as a graph $r=r_{t}(\theta)$ over $\mathbb{S}^{n}$ in polar coordinates with respect to the origin. Furthermore, this graph satisfies the gradient estimate

\begin{equation} \label{grad}
    |Dr_{t}| \leq \frac{r_{t} \Lambda}{\sqrt{r_{t}^{2} - \Lambda^{2}}} 
\end{equation}
for some $\Lambda \leq \frac{\text{diam}(N_{0})}{2}$.
\end{theorem}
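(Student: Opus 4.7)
The plan is to apply \thref{lipschitz} in every direction $\nu \in \mathbb{S}^{n}$ and assemble the resulting one-directional Lipschitz graphs into a single polar graph outside $B_{\text{diam}(N_0)/2}(0)$ with the stated gradient bound. The origin placement is used to obtain a common threshold $\Lambda \leq \text{diam}(N_0)/2$ for which the admissibility hypothesis of \thref{lipschitz} is available in every direction at level $-\Lambda$.

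First, I would establish a midpoint inequality. Writing the two farthest points as $p_1 = r_0 e$ and $p_2 = -r_0 e$ with $r_0 = \text{diam}(N_0)/2$, the inequalities $h_+(\nu) \geq r_0 |\langle e, \nu \rangle|$ and $h_-(\nu) \leq -r_0 |\langle e, \nu \rangle|$ (which follow from $p_1, p_2 \in \overline{E_0}$), together with the width bound $h_+(\nu) - h_-(\nu) \leq \text{diam}(N_0)$, yield that the midpoint $m(\nu) := (h_+(\nu) + h_-(\nu))/2$ of the $\nu$-projection of $\overline{E_0}$ satisfies $|m(\nu)| \leq \text{diam}(N_0)/2$ for every $\nu \in \mathbb{S}^n$. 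Combining this with the reflection definition of admissibility, I would identify $\Lambda \leq \text{diam}(N_0)/2$ such that $P_{\tilde\lambda, \tilde\nu}$ is admissible with respect to $E_0$ whenever $\tilde\lambda < -\Lambda$ and $\tilde\nu$ lies in a neighborhood of any fixed $\nu$. Applying \thref{lipschitz} then gives a uniform Lipschitz graph representation of $\partial E_t \cap H^-_{-\Lambda, \nu}$ in each direction $\nu$.

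To deduce the polar graph property outside $B_{\text{diam}(N_0)/2}(0)$, I would argue by contradiction: if two distinct points $x_1, x_2 \in \partial E_t$ with $|x_i| > \text{diam}(N_0)/2$ lay on a common ray from the origin, choosing $\nu = -x_1/|x_1|$ would give $\langle x_i, \nu \rangle = -|x_i| < -\text{diam}(N_0)/2 \leq -\Lambda$, so both points would lie in $H^-_{-\Lambda, \nu}$ and project to the same foot of the ray on $P_{-\Lambda, \nu}$; the Lipschitz graph property then forces $x_1 = x_2$. For the gradient estimate, at a point $x$ on the graph with $|x| = r > \Lambda$, the set of unit vectors $\tilde\nu$ for which $x \in H^-_{-\Lambda, \tilde\nu}$ is a spherical cap about $-x/|x|$ of angular radius $\arccos(\Lambda/r)$, since $\langle x, \tilde\nu \rangle < -\Lambda$ is equivalent to $\cos(\angle(\tilde\nu, -x/|x|)) > \Lambda/r$. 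The Lipschitz constant $\cot \hat\epsilon$ from the proof of \thref{lipschitz} then reduces to $\Lambda/\sqrt{r^2 - \Lambda^2}$, and translating via the identity $|Dr_t|/r_t = \tan\alpha$, where $\alpha$ is the angle between the outward normal and the radial direction, produces the claimed bound $|Dr_t| \leq r_t \Lambda / \sqrt{r_t^2 - \Lambda^2}$.

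The main obstacle I expect is the admissibility step: the one-dimensional necessary condition $\tilde\lambda \leq m(\tilde\nu)$ follows immediately from the midpoint inequality, but full admissibility requires the reflection of $E_0 \cap H^-_{\tilde\lambda, \tilde\nu}$ across $P_{\tilde\lambda, \tilde\nu}$ to land inside $E_0$, an $(n+1)$-dimensional geometric statement whose derivation from the diameter bound alone I anticipate to be the technically delicate part of the argument.
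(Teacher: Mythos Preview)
Your plan coincides with the paper's, which explicitly follows Theorem~4 of Chow--Gulliver \cite{chow}. The one substantive difference is in how the admissibility threshold is handled, and it dissolves the obstacle you flag: rather than attempting to \emph{derive} admissibility for $\tilde\lambda < -\Lambda$ from a midpoint inequality, the paper simply \emph{defines} $\Lambda = \sup_{\nu \in \mathbb{S}^{n}} \bigl(-\lambda_{\max}(\nu)\bigr)$, where $\lambda_{\max}(\nu)$ is the supremum of those $\lambda$ for which $P_{\tilde\lambda,\nu}$ is admissible for every $\tilde\lambda<\lambda$. With this definition, admissibility in every direction $\tilde\nu$ and for every $\tilde\lambda<-\Lambda$ is automatic, so the ``$(n+1)$-dimensional reflection'' issue never arises; the only remaining task is the scalar bound $\Lambda \le \tfrac{1}{2}\mathrm{diam}(N_{0})$, which the paper asserts directly (with the chosen origin) by appeal to \cite{chow} rather than via your midpoint computation. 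Your contradiction argument for the radial graph property and your angular-cap derivation of the gradient bound are exactly the paper's argument in dual language: the paper observes that the tangent vector $\tau=-r\,\partial_{\theta}-Dr_{t}(\theta)\,\nu_{0}$ must be transverse to every $\nu$ with $\langle \nu,x_{0}\rangle<-\Lambda$, giving $\langle \tau/|\tau|,\,x_{0}\rangle \ge -\Lambda$, which rearranges to the same inequality $|Dr_{t}|\le r_{t}\Lambda/\sqrt{r_{t}^{2}-\Lambda^{2}}$.
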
 

\begin{proof} We follow the proof of Theorem 4 in \cite{chow}. For a given $E_{0}$, take $0 \in \mathbb{R}^{n+1}$ to be the midpoint of the line connecting a pair of distance-maximizing points on $\partial E_{0}$. For a given $\nu \in \mathbb{S}^{n}$, define $\lambda_{\text{max}}$ to be the supremum over all $\lambda \in \mathbb{R}$ such that $P_{\tilde{\lambda}, \nu}$ is admissible with respect to $E_{0}$ for each $\tilde{\lambda} \in [-\infty, \lambda)$, then define $\Lambda= \sup_{\nu \in \mathbb{S}^{n}} -\lambda_{\text{max}}$. Then $0 \leq \Lambda \leq \frac{\text{diam}(N_{0})}{2}$. Given $x_{0} \in \partial E_{t}$ with $|x_{0}|=r_{0} > \Lambda$, we know $x_{0} \in H^{-}_{\lambda_{\text{max}}, \nu}$ for each $\nu \in \mathbb{S}^{n}$ and associated $\lambda_{\text{max}}$. Write $x_{0}= r_{0} \frac{\partial}{\partial r}$. Then for $\nu_{0}= - \frac{\partial}{\partial r}$ we have $\langle \nu_{0}, x_{0} \rangle = -r_{0} < -\Lambda$, so by \thref{lipschitz} $\partial E_{t}$ is a Lipschitz graph $r= r_{t}(\theta)$ in some neighborhood of $x_{0}$. Letting $\frac{\partial}{\partial \theta}$ be a unit tangent over $\mathbb{S}^{n}$, the vector $\tau = -r\frac{\partial}{\partial \theta} - Dr_{t} (\theta) \nu_{0}$ is tangent to $\partial E_{t}$. Also by \thref{lipschitz}, $\tau$ is transverse to $\nu$ for all $\nu \in \mathbb{S}^{n}$ with $\langle \nu, x_{0} \rangle < -\Lambda$, so

\begin{equation}
    \frac{r Dr_{t}(\theta)}{(r^{2} + (Dr_{t}(\theta))^{2})^{\frac{1}{2}}} = \langle \frac{\tau}{|\tau|}, x_{0} \rangle \geq -\Lambda.
\end{equation}

Rearranging this yields

\begin{equation}
    Dr_{t} (\theta) \leq \frac{r \Lambda}{(r^{2}- \Lambda^{2})^{\frac{1}{2}}}.
\end{equation}
\end{proof}

\begin{theorem} {(Waiting Time for Star-shapedness)} \thlabel{existence}
For bounded, open $E_{0}$ with $C^{2}$ boundary, suppose $u: \mathbb{R}^{n+1} \rightarrow \mathbb{R}$ is the variational solution to IMCF with initial condition $E_{0}$ such that the sets $\{ u < t \}$ are precompact for each $t$. Let $R$ be the inradius of $N_{0}$, that is the radius of the largest ball contained within $N_{0}$. Then the level sets $N_{t}= \partial E_{t}$ of $u$ lie entirely outside $B_{\frac{\text{diam}(N_{0})}{2}}(0)$ for any $t \geq t^{*}=  n\log{(R^{-1} \text{diam} (N_{0}))}$. In particular, $E_{t}$ is star-shaped and hence smooth for every $t \geq t_{*}$ and thus $u$ may be extended to all of $\mathbb{R}^{n+1}$.
\end{theorem}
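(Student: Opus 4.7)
The plan is to establish the inclusion $B_{\text{diam}(N_0)/2}(0) \subset E_{t^*}$, from which the assertion that $N_t$ lies outside this ball for all $t \geq t^*$ follows by the monotonicity of $\{E_t\}$. The star-shapedness and smoothness assertions then follow by combining this inclusion with \thref{graph} and Gerhardt's classical existence result. Throughout the proof I would write $d := \text{diam}(N_0)$ for brevity.

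First I would compare $u$ with the level-set function of an evolving inscribed sphere. The inradius assumption gives a center $q$ with $B_R(q) \subset E_0$, and the variational solution with initial datum $B_R(q)$ is exactly the classical Gerhardt flow $B_{Re^{t/n}}(q)$, whose level set function is $v(x) = n\log(|x-q|/R)$ and whose sublevel sets are trivially precompact. The Huisken--Ilmanen comparison principle (the very one already used in the proof of \thref{comparison}) applied to $\{v<0\}=B_R(q) \subset E_0 = \{u<0\}$ yields $\{v<t\}=B_{Re^{t/n}}(q) \subset \{u<t\}=E_t$ for all $t \geq 0$. Specializing to $t=t^*$ gives $B_d(q) \subset E_{t^*}$. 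A short geometric argument using the choice of $0$ as midpoint of a diameter-realizing pair $p_1,p_2 \in N_0$, together with the constraints $|q-p_1|,|q-p_2| \leq d$ coming from $\text{diam}(\overline{E_0})=d$, then produces the further inclusion $B_{d/2}(0) \subset B_d(q)$, whence $B_{d/2}(0) \subset E_t$ for all $t \geq t^*$.

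With this in hand, for $t \geq t^*$ the surface $N_t$ lies entirely outside $B_{d/2}(0) \supseteq B_{\Lambda}(0)$, so \thref{graph} expresses $N_t$ as a radial graph $r=r_t(\theta)$ over all of $\mathbb{S}^n$ satisfying $|Dr_t| \leq r_t \Lambda/\sqrt{r_t^2 - \Lambda^2}$. A direct computation of the support function yields
\begin{equation*}
\omega = \frac{r_t}{\sqrt{r_t^2 + |Dr_t|^2}} \geq \frac{\sqrt{r_t^2-\Lambda^2}}{r_t} > 0,
\end{equation*}
so $N_t$ is star-shaped with respect to $0$, and the Lipschitz bound provides the regularity needed to pass to a classical setting.

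Finally, to upgrade from a Lipschitz star-shaped graph to the smooth globally-defined $u$, I would invoke Gerhardt's theorem, which produces a smooth classical solution from any star-shaped surface of sufficient regularity (the $C^2$ hypothesis on $N_0$ combined with standard parabolic smoothing up to $t^*$ supplies this). Since any classical solution furnishes a variational solution, and the variational solution with precompact sublevel sets is unique, this classical flow coincides with the level sets $\{N_t\}_{t \geq t^*}$ of $u$ and provides the desired extension to $\mathbb{R}^{n+1}$. The main obstacle in this plan is the geometric inclusion $B_{d/2}(0) \subset B_d(q)$ in the second step: controlling $|q|$ sharply in terms of $d$ via the diameter constraints is delicate, and may require refining the single-ball comparison to one involving the full family of inscribed $R$-balls.
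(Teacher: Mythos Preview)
Your proposal is correct and follows essentially the same route as the paper: compare with an inscribed expanding sphere via the Huisken--Ilmanen comparison principle to obtain $B_d(q)\subset E_{t^*}$, deduce $B_{d/2}(0)\subset E_{t^*}$, and then invoke \thref{graph} together with Gerhardt's theorem.

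Your only hesitation---the inclusion $B_{d/2}(0)\subset B_d(q)$---is not an obstacle at all. Since $B_R(q)\subset E_0$ we have $q\in E_0$, and the choice of $0$ as midpoint of a diameter-realizing pair gives $\overline{E_0}\subset \overline{B_{d/2}(0)}$; hence $|q|\le d/2$, and the triangle inequality yields $B_{d/2}(0)\subset B_d(q)$ immediately. No refinement to a family of inscribed balls is needed. The paper records exactly this one-line containment and then appeals directly to Gerhardt, without writing out the support-function computation you include (which is fine, and implicit in \thref{graph}).
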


\begin{proof}
Pick $0$ to be the midpoint between the pair of points $x,y \in N_{0}$ which maximize $|x-y|$. Then $N_{0} \subset B_{\frac{\text{diam}(N_{0})}{2}}(0)$. Since $R$ is the inradius of $N_{0}$, there exists some $x \in E_{0}$ such that $B_{R}(x) \subset E_{0}$. By Theorem 2.2 in \cite{huisken}, we must have $B_{R e^{\frac{t}{n}}}(x) \subset E_{t}$ for each $t \in [0,T)$. 
We must have that $B_{\frac{\text{diam}(N_{0})}{2}} (0) \subset B_{R e^{\frac{t_{*}}{n}}}(x)= B_{\text{diam}(N_{0})} (x)$. Conclude then that $B_{\frac{\text{diam}(N_{0})}{2}} (0) \subset E_{t}$ and thus $\partial E_{t}$ is star-shaped. \thref{existence} then follows from the long-time existence results in \cite{gerhardt}.
\end{proof} 
\begin{remark}
In Remark 2.8(b) of \cite{huisken2}, the authors suggested a similar ``waiting time" for star-shapedness of the flow depending on the diameter and area of $N_{0}$ if the reflection property was shown to apply to their variational solutions. We were unable to determine how they derived this time, and so we instead include the above one.
\end{remark}

\section{Consequences for Classical Solutions}
In this section, we show that an embedded connected classical solution of \eqref{IMCF} always gives rise to a variational weak solution. Later, we show that if this solution exists and is embedded beyond the time $2t^{*}$ defined in Theorem 2, then its flow surfaces equal the level sets of the variational solution with initial condition $E_{0}$ which has $E_{t}$ precompact. This allows us to apply Theorem 2 to these classical solutions, establishing star-shapedness beyond the time $t^{*}$. Key to showing this is a comparison principle for IMCF which is slightly weaker than the well-known two-sided avoidance principle for MCF:

\begin{theorem}{(One-Sided Avoidance Principle)}\thlabel{avoidance}
Let $N_{0} \subset \mathbb{R}^{n+1}$ be a connected, closed hypersurface, and $\{N_{t}\}_{0 \leq t < T}$ the corresponding solution to \eqref{IMCF}. Suppose $N_{t}$ is embedded for each $t \in [0,T)$, and let $E_{t} \subset \mathbb{R}^{n+1}$ be the open domain enclosed by $N_{t}$. Now let $\tilde{N}_{0} \subset E_{0}$ be a closed, connected hypersurface, and $\{\tilde{N}_{t}\}_{0 \leq t < \tilde{T}}$ the corresponding solution to \eqref{IMCF} with $\tilde{N}_{t}$ embedded for each $t \in [0,T)$. Then $\overline{\tilde{E}}_{t} \subset E_{t}$ for each $t \in [0,T)$, and $\text{dist}\{ N_{t}, \tilde{N}_{t} \}$ is non-decreasing.
\end{theorem}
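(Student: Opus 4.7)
The plan is a continuity/contradiction argument coupled to a parabolic maximum principle at a hypothetical first tangential contact between the two hypersurfaces. Let $T^* = \min(T, \tilde T)$ and define
\[ J = \{ t \in [0, T^*) : \overline{\tilde E_s} \subset E_s \text{ for every } s \in [0, t] \}. \]
By hypothesis $0 \in J$, and a standard compactness/continuity argument shows that $J$ is relatively open in $[0, T^*)$; the main task is to show $J$ is also closed. Suppose for contradiction $t_0 = \sup J < T^*$. Continuity of both smooth flows forces $\overline{\tilde E_{t_0}} \subset \overline{E_{t_0}}$, so the failure of $t_0 \in J$ requires a tangential contact $p^* \in \tilde N_{t_0} \cap N_{t_0}$. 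Strict inclusion for $s < t_0$ aligns the outward normals at $p^*$, and writing both surfaces as local graphs over their common tangent plane yields the second-fundamental-form ordering $A_{\tilde N}(p^*) \geq A_N(p^*)$, in particular $H_{\tilde N}(p^*) \geq H_N(p^*) > 0$.

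To derive a contradiction I would introduce the signed distance $d_t$ to $N_t$ (negative inside $E_t$, smooth in a tubular neighborhood of $N_t$) and study $\phi(p, t) = d_t(F^{\tilde N}(p, t))$ on $\tilde N \times [0, t_0]$. By construction $\phi < 0$ for $t < t_0$ and $\phi(p^*, t_0) = 0$, so $(p^*, t_0)$ is an interior parabolic maximum. At the touching point the IMCF equations for both flows give the pointwise identities
\[ \partial_t \phi(p^*, t_0) = \frac{1}{H_{\tilde N}} - \frac{1}{H_N}, \qquad \Delta^{\tilde N} \phi(p^*, t_0) = H_N - H_{\tilde N}, \]
so $\partial_t \phi = \Delta^{\tilde N} \phi/(H_N H_{\tilde N})$. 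Extending the calculation to a space-time neighborhood inside the tubular neighborhood of $N_{t_0}$ produces a linear parabolic inequality $\partial_t \phi \leq c(p,t)\Delta^{\tilde N} \phi + b(p,t) \cdot \nabla^{\tilde N} \phi$ with bounded coefficients and $c = 1/(H_N H_{\tilde N}) > 0$. The strong parabolic maximum principle then forces $\phi \equiv 0$ in a backward-parabolic neighborhood of $(p^*, t_0)$, contradicting $\phi < 0$ for $t < t_0$; hence $J = [0, T^*)$.

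For the monotonicity assertion, note that $-\mathrm{dist}(N_t, \tilde N_t)$ coincides with $\max_p \phi(p, t)$ at each $t$ for which this maximum is attained inside the tubular neighborhood of $N_t$. Applying the same linear parabolic inequality and the standard maximum principle on the closed manifold $\tilde N$ shows that $\max_p \phi(\cdot, t)$ is non-increasing in $t$, and hence $\mathrm{dist}(N_t, \tilde N_t)$ is non-decreasing.

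The principal obstacle I foresee is upgrading the pointwise identity at $(p^*, t_0)$ to a genuine linear parabolic differential inequality on an open space-time neighborhood with coefficients satisfying the hypotheses of the strong maximum principle. This requires uniform control of the signed distance off of $N_t$ (smooth only in a tubular neighborhood whose radius depends on the geometry of $N_t$), joint smoothness of the two moving embeddings $F^N$ and $F^{\tilde N}$, and the positive lower bound on $c = 1/(H_N H_{\tilde N})$, which is secured on compact space-time neighborhoods by the mean convexity $H > 0$ built into IMCF for both classical solutions.
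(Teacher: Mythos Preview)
Your approach is sound but differs substantially from the paper's, which avoids the strong maximum principle entirely. The paper works directly with $f(p,q,t)=|\tilde F_t(p)-F_t(q)|^2$ on the product $\tilde N\times N$ and studies $\ell(t)=\min_{(p,q)} f(p,q,t)$ via Hamilton's trick: $\ell$ is locally Lipschitz, and where differentiable one has $\ell'(t_0)=\partial_t f(p_0,q_0,t_0)$ at any minimizing pair. At such a pair the segment joining the two points is normal to both surfaces with the outward normals parallel; translating $\tilde N_{t_0}$ along this segment until it touches $N_{t_0}$ from inside gives the principal-curvature comparison $H_{\tilde N}(p_0)\ge H_N(q_0)$, and a two-line computation then yields $\partial_t f(p_0,q_0,t_0)\ge 0$. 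This delivers both the avoidance statement and the monotonicity of $\mathrm{dist}(N_t,\tilde N_t)$ in one stroke, with no tubular-neighborhood bookkeeping and no strong maximum principle.

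Your signed-distance-plus-strong-MP route is a legitimate alternative for the avoidance half, and you correctly flag the main technical cost of promoting the pointwise identity at $(p^*,t_0)$ to a linear parabolic inequality on a space-time neighborhood (this is usually done by passing to graphical coordinates over the common tangent plane and linearizing the IMCF graph equation). Your monotonicity argument, however, carries an additional gap that the paper's approach sidesteps: the identification $\max_p\phi(p,t)=-\mathrm{dist}(N_t,\tilde N_t)$ and the smoothness of $\phi$ at that maximum both require the nearest point of $\tilde N_t$ to $N_t$ to lie inside the focal tubular neighborhood of $N_t$, which is not guaranteed when the surfaces are far apart. The paper's product-manifold function $f$ is globally smooth on $\tilde N\times N\times[0,T)$, so no such restriction arises.
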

\begin{proof}
Calling $\tilde{N}_{t} = \tilde{F}_{t}(\tilde{N})$, $N_{t}=F_{t}(N)$ consider the function $f: \tilde{N} \times N \times [0, T) \rightarrow \mathbb{R}$ defined by $f(p,q,t)=|\tilde{F}_{t}(p)- F_{t}(q)|^{2}$. Define $\ell: [0,T) \rightarrow \mathbb{R}$ by $\ell(t)=\min_{(p,q) \in \tilde{N} \times N} f(p,q,t)$, where $\ell(0) >0$ by hypothesis. Since $f$ is smooth and $\tilde{N} \times N$ is closed, $\ell$ is locally Lipschitz in $(0,T)$ according to Lemma 2.1.3 in \cite{carlos}. Also by this lemma, for any $t_{0} \in [0,T)$ where $\ell(t)$ is differentiable we have

\begin{equation}
    \frac{d}{dt} \ell (t_{0}) = \partial_{t} f(p_{0}, q_{0}, t_{0})
\end{equation}
for any pair of points $(p_{0},q_{0}) \in \tilde{N} \times N$ satisfying $\ell(t_{0})=f(t_{0},p,q)$. We know $\ell$ is positive at least  for small times, so let $\mathcal{A} \subset [0,T)$ be the largest interval containing $0$ over which $\ell$ is strictly positive. Note that $\overline{\tilde{E}}_{t} \subset E_{t}$ for $t \in \mathcal{A}$. Take $t_{0} \in \mathcal{A}$ where $\ell$ is differentiable and let $(p_{0},q_{0}) \in \tilde{N} \times N$ be a minimizing pair of points of $f$ at $t_{0}$. The outward pointing normals at $p_{0}$ and $q_{0}$ must be parellel, since the line segment joining $\tilde{F}_{t_{0}}(p_{0})$ and $F_{t_{0}}(q_{0})$ is contained in $\overline{E_{t}}$ and does not intersect $\tilde{E}_{t}$. Calling $\nu_{0}$ the outward unit normal at $\tilde{F}_{t_{0}}(p_{0}) \in \tilde{N}_{t_{0}}$, $\tilde{F}_{t}(q_{0}) \in N_{t_{0}}$, we consider the translated surface $N'_{t_{0}}$ defined by

\begin{equation*}
    N'_{t_{0}} = \{ x + \sqrt{\ell(t_{0})} \nu_{0} | x \in \tilde{N}_{t_{0}} \}.
\end{equation*}

$N'_{t_{0}}$ and $N_{t_{0}}$ share the same tangent plane at $\tilde{F}_{t_{0}}(p_{0}) + \sqrt{\ell(t_{0})} \nu_{0} \in N'_{t_{0}}$ and $F_{t_{0}}(q_{0}) \in N_{t_{0}}$. Since $\sqrt{\ell(t_{0})}= \text{dist}\{ N_{t_{0}}, \tilde{N}_{t_{0}} \}$, we have the inclusion $E'_{t_{0}} \subset E_{t_{0}}$, where $E'_{t_{0}}$ is the set enclosed by $N'_{t_{0}}$. Since $\tilde{F}_{t_{0}}(p_{0}) + \sqrt{\ell(t_{0})} \nu_{0}=F_{t_{0}}(q_{0})$, this inclusion particularly tells us that

\begin{equation*}
    \lambda_{i} \leq \lambda_{i}', \hspace{0.5cm} 1 \leq i \leq n
\end{equation*}
where $\lambda_{i}$ and $\lambda_{i}'$ are the principal curvatures of $N_{t_{0}}$ and $N'_{t_{0}}$ at this intersection point respectively. Translating back to $\tilde{N}_{t_{0}}$, this tells us

\begin{equation}
    H(p_{0},t_{0}) \geq H(q_{0},t_{0}).
\end{equation}

Now we compute $\partial_{t} f (p_{0}, q_{0}, t_{0})$:

\begin{eqnarray*}
    \partial_{t} f (p_{0}, q_{0}, t_{0}) &=& \partial_{t} \langle \tilde{F}_{t_{0}} (p_{0}) - F_{t_{0}} (q_{0}), \tilde{F}_{t_{0}} (p_{0}) - F_{t_{0}} (q_{0}) \rangle \\
    &=& 2 \langle \frac{\partial}{\partial t} \tilde{F}_{t_{0}} (p_{0}) - \frac{\partial}{\partial t} F_{t_{0}}(q_{0}), \tilde{F}_{t_{0}} (p_{0}) - F_{t_{0}} (q_{0}) \rangle \\
    &=& 2 \langle (\frac{1}{H(p_{0},t_{0})} - \frac{1}{H(q_{0}, t_{0})}) \nu_{0}, -\sqrt{\ell(t_{0})} \nu_{0} \rangle \\
    &=& 2 \sqrt{\ell(t_{0})} (\frac{1}{H(q_{0},t_{0})} -\frac{1}{H(p_{0},t_{0})}) \geq 0.
\end{eqnarray*}

So $\frac{d}{dt} \ell(t) \geq 0$ wherever differentiable in $\mathcal{A}$. Taking times $t_{1} < t_{2}$ in $\mathcal{A}$ and using the fact that $\ell$ has total bounded variation in $[t_{1},t_{2}]$, an application of the Fundamental Theorem of calculus reveals

\begin{equation} \label{ell}
    \ell(t_{2})= \ell(t_{1}) + \int_{t_{1}}^{t_{2}} \frac{d}{dt} \ell (t) dt \geq \ell(t_{1}).
\end{equation}
Then if $\tilde{t}=\sup \mathcal{A} < T$, we would obtain the bound $\ell(\tilde{t}) \geq \ell(0) >0$, which would contradict $\mathcal{A}$ being the largest interval containing $0$ over which $\ell$ is positive. Thus $\mathcal{A}=[0,T)$ and hence $\overline{\tilde{E}_{t}} \subset E_{t}$ over $[0,T)$. The non-decreasing property also follows from \eqref{ell}.
\end{proof}

Notice that the above argument would not work if the normal vectors at the distance-minimizing point were anti-parallel, which happens in the case that the two disjoint surfaces enclose disjoint subsets. For this same reason, initially embedded solutions to \eqref{IMCF} need not remain embedded as long as they exist. For example, two initially disjoint spheres, which eventually intersect under IMCF, respect neither a two-sided avoidance principle nor an embeddedness principle. Furthermore, the flow surfaces in this case do not foliate their image. This particularly means that, after a sufficiently long time, the two spheres will not define a weak solution to the flow, even though their classical solution continues. An application of the previous theorem shows, however, that the latter inconvenience cannot happen if the flow surfaces remain embedded.
\begin{theorem} \thlabel{embed}
Let $\{N_{t}\}_{t \in [0,T)}$ solve \eqref{IMCF} with $N_{0}$ a connected hypersurface. Then the function $u: U= \cup_{0 \leq t < T} N_{t} \subset \mathbb{R}^{n+1} \rightarrow \mathbb{R}$ given by $u(x)=t$ if $x \in N_{t}$ is well-defined and differentiable with nonvanishing gradient if and only if the corresponding $F_{t}$ are embeddings for every $t \in [0,T)$.
\end{theorem}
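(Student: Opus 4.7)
The plan is to prove each implication separately. For $(\Leftarrow)$, I would first show that distinct time slices $N_{t_1}, N_{t_2}$ are disjoint (so that $u$ is well defined), and then derive smoothness and nonvanishing of $\nabla u$ from the fact that the flow map $F$ is a local diffeomorphism. For $(\Rightarrow)$, I would use a topological observation about the degree of the covering map $F_t : N \to N_t$.

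For $(\Leftarrow)$, fix $t_1 < t_2$ and $y \in N_{t_1}$ with outward unit normal $\nu_y$. By $C^2$ regularity of $N_{t_1}$ there exists $R > 0$ such that $\overline{B_R(y - R \nu_y)}$ meets $N_{t_1}$ only at $y$, and for any $\epsilon \in (0, R)$ the slightly shrunk tangent ball $\overline{B_{R - \epsilon}(y - R \nu_y)}$ lies strictly inside $E_{t_1}$. Concentric spheres evolve under IMCF as concentric spheres of radius $(R - \epsilon) e^{s/n}$, and since the ambient flow is embedded throughout $[0, T)$ by hypothesis, \thref{avoidance} applied on the interval $[t_1, T)$ gives
\[
\overline{B_{(R - \epsilon) e^{s/n}}(y - R \nu_y)} \;\subset\; E_{t_1 + s}
\]
for every $s \in [0, T - t_1)$. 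For any fixed $s > 0$, choosing $\epsilon$ small enough that $(R - \epsilon) e^{s/n} > R$ forces $y \in E_{t_1 + s}$; letting $y \in N_{t_1}$ vary yields $N_{t_1} \subset E_{t_1 + s}$ and therefore $N_{t_1} \cap N_{t_1 + s} = \varnothing$, so $u$ is single valued. The differential $dF_{(p,t)}$ then has full rank $n+1$ because $dF_t|_p : T_p N \to T_{F_t(p)} N_t$ is an isomorphism (immersion) and the time direction contributes the transverse vector $H^{-1} \nu$. Combined with the global injectivity of $F$ (each $F_t$ is an embedding and the $N_t$ are pairwise disjoint), $F$ is a diffeomorphism onto $U$, so $u = \pi_2 \circ F^{-1}$ is smooth, and the identity $|\nabla u| = H > 0$ recorded in Section~2 supplies the gradient bound.

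For $(\Rightarrow)$, suppose $u$ is single valued and differentiable with $|\nabla u| \neq 0$ on $U$. The implicit function theorem realizes each $N_t = u^{-1}(t)$ as a smoothly embedded hypersurface of $\mathbb{R}^{n+1}$, and the immersion $F_t : N \to N_t$ is then a local diffeomorphism between connected $n$-manifolds with $N$ compact, hence a covering map. Its sheet number is locally constant in $t$ and so constant on $[0, T)$; under the standing identification of $N$ with the initial embedded hypersurface $N_0$ (so that $\deg F_0 = 1$), we conclude that every $F_t$ is an embedding.

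The principal obstacle is the disjointness step, since \thref{avoidance} is formulated for two flows launched simultaneously rather than a single flow at two different times, and so cannot be invoked directly on $N_{t_1}$ and $N_{t_2}$. The inscribed-sphere trick bypasses this: a tangent ball at $y \in N_{t_1}$ has explicit IMCF evolution $B_{\rho_0 e^{s/n}}$, the shrinking parameter $\epsilon$ supplies the strict enclosure needed to apply \thref{avoidance}, and the exponential growth of the radius exceeds $R$ for every positive $s$, pushing $y$ into the interior of $E_{t_1 + s}$.
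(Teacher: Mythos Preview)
Your argument is correct. The $(\Rightarrow)$ direction is essentially the paper's: both observe that the level sets of a function with nonvanishing gradient are embedded hypersurfaces diffeomorphic to one another, and your covering-map formulation simply makes explicit why $F_t$ itself (and not just its image) is an embedding.

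The $(\Leftarrow)$ direction, however, is a genuinely different route. The paper argues by continuity: it lets $\mathcal{A}\subset[0,T)$ be the maximal interval on which all time slices are pairwise disjoint, shows $\mathcal{A}$ is closed, and then derives a contradiction from $\sup\mathcal{A}<T$ by applying \thref{avoidance} not to a sphere but to the \emph{time-shifted flow} $\tilde N_t=N_{t+(t_2-t_1)}$ compared against $N_t$ itself. Your inscribed-sphere barrier is more direct and more local: it avoids the open--closed bookkeeping entirely and exploits the explicit exponential growth of round spheres to push each point of $N_{t_1}$ strictly into $E_{t_1+s}$. The paper's approach has the minor advantage of comparing two copies of the same flow, so nothing about the interior ball condition or about the pointwise dependence of $R$ on $y$ needs to be said; your approach is shorter once \thref{avoidance} is in hand and makes transparent why any positive elapsed time already forces disjointness. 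Both arguments ultimately rest on the same one-sided avoidance principle, invoked at different scales.
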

\begin{proof}
$\Rightarrow$ We have by hypothesis that the function $u$ over the region $U$ given by $u(x)=t$ if $x \in N_{t}$ has nonvanishing gradient. Then the flow surfaces $N_{t}$ are each level sets of $u$. Since $N_{t}$ are the compact level sets of a function with nonvanishing gradient, they are necessarily diffeomorphic to one another, and hence remain embedded. 
$\Leftarrow$ Since each $N_{t}$ is a closed, connected, embedded hypersurface, we let $E_{t}$ be defined as in Theorem 3. In order for the function given by $u(x)=t$ for $x \in N_{t}$ over the region $U$ to be well-defined, we must have that $N_{t_{1}} \cap N_{t_{2}} = \varnothing$ for $t_{1} \neq t_{2} \in [0,T)$. To show this, first assume $T$ is finite and define $\mathcal{A}$ to be the largest interval of $[0,T)$ containing $0$ with the property that $N_{t_{1}} \cap N_{t_{2}} = \varnothing$ for any $t_{1},t_{2} \in \mathcal{A}$. We demonstrate in fact that $\mathcal{A}= [0,T)$. Define $\tilde{t}= \sup \mathcal{A}$. We will argue that $\tilde{t}=T$ by contradiction.

 First notice for two times $t_{a} < t_{b}$ in $\mathcal{A}$, we have the inclusion $\overline{E}_{t_{a}} \subset E_{t_{b}}$. Indeed, for $0 <\delta < t_{b} -t_{a}$ small we know $\overline{E_{t_{a}}} \subset E_{t}$ for $t \in (t_{a}, t_{a} + \delta]$ by the positive outward flow speed. Then if $\overline{E}_{t_{a}} \not \subset E_{t_{b}}$, letting $t_{0}$ be the first time over $t \in (t_{a} + \delta,t_{b}]$ for which $\overline{E}_{t_{a}} \not \subset E_{t}$ we would have $\partial E_{t_{a}} \cap \partial E_{t_{0}} \neq \varnothing$. But this would contradict the fact that $t_{a},t_{0} \in \mathcal{A}$ so that $N_{t_{a}}$ and $N_{t_{0}}$ cannot intersect. Thus $\overline{E}_{t_{a}} \subset E_{t_{b}}$, which also means $N_{t_{a}} \subset E_{t_{b}}$.

We claim by contradiction that if $\tilde{t} < T$ then $\mathcal{A}$ is closed. Indeed, if $\mathcal{A}=[0,\tilde{t})$ then $[0,\tilde{t}]$ properly contains $\mathcal{A}$ (assuming $\tilde{t} \neq 0$, in which case $\mathcal{A}$ is automatically closed). Then there are two times $t_{1} < t_{2}$ in $[0, \tilde{t}]$ with $N_{t_{2}} \cap N_{t_{1}} \neq \varnothing$. We must have $t_{2}=\tilde{t}$ since otherwise $t_{1}, t_{2} \in \mathcal{A}$. On the other hand, the positive outward flow speed tells us that for some small $\delta > 0$, we have $\overline{\tilde{E}}_{t} \subset E_{\tilde{t}}$ for every $t \in [\tilde{t} - \delta, \tilde{t})$. But for $0 \leq t < \tilde{t} - \delta$ the above nesting result yields $\overline{E}_{t} \subset E_{\tilde{t}-\delta}$ and so $\overline{E}_{t} \subset E_{\tilde{t}}$ for each $t \in \mathcal{A}$. This implies $N_{\tilde{t}}$ cannot intersect any $N_{t}$ with $t \in \mathcal{A}$. So $\mathcal{A}=[0,\tilde{t}]$ for $\tilde{t} < T$. 

Now take $\delta < T - \tilde{t}$ and small enough so that $\overline{E}_{\tilde{t}} \subset E_{t}$ for each $t \in (\tilde{t}, \tilde{t} + \delta)$. Since $\mathcal{A} \subset [0, \tilde{t} + \delta)$, there are two times $t_{1} < t_{2}$ in $[0,\tilde{t} + \delta)$  with $N_{t_{1}} \cap N_{t_{2}} \neq \varnothing$. We cannot have $t_{1}, t_{2} \in \mathcal{A}$ by definition, and if $t_{1} \in \mathcal{A}, t_{2} \not \in \mathcal{A}$, we would get $\overline{E}_{t_{1}} \subset \overline{E}_{\tilde{t}} \subset E_{t_{2}}$ by nesting in $\mathcal{A}$, meaning $N_{t_{1}} \cap N_{t_{2}} = \varnothing$. So $\tilde{t} < t_{1} < t_{2} < \tilde{t} + \delta$. 

Define a new solution $\{\tilde{N_{t}}\}_{\tilde{t} \leq t < T - (t_{2} - t_{1})}$ to \eqref{IMCF} by $\tilde{N}_{t}=N_{t + (t_{2}-t_{1})}$. Then $N_{\tilde{t}} \subset \tilde{E}_{\tilde{t}}=E_{\tilde{t} + (t_{2} -t_{1})}$ since $0 < t_{2} - t_{1} < \delta$. By the One-Sided Avoidance Principle, this implies $\overline{E}_{t_{1}} \subset \tilde{E}_{t_{1}}=E_{t_{2}}$, but this once again contradicts $N_{t_{1}} \cap N_{t_{2}} \neq \varnothing$. Conclude $\mathcal{A}=[0,T)$. According to Lemma 2.3 in \cite{huisken}, the corresponding $u$ must then minimize \eqref{min} over $U$, and since the level sets are smooth hypersurfaces, $u$ must be differentiable with $H=|\nabla u| > 0$. The case $T= \infty$ follows via a continuation argument. 
\end{proof}

We would like to establish that if a classical solution $N_{t}$ to IMCF induces a variational solution $u$ over every $t \in [0,T)$ for sufficiently large $T$, then $N_{t}$ must be star-shaped by some time within $[0,T)$. We know this must be true for the flow surfaces of variational solution $\tilde{u}: \mathbb{R}^{n+1} \rightarrow \mathbb{R}$ with initial condition $E_{0}$ from \thref{existence}, so we seek to establish a correspondence between $u$ and $\tilde{u}$. Recall the sets $\tilde{E}_{t} = \{ \tilde{u} < t \}$ and $\tilde{E}^{+}_{t}= \text{Int}(\{ \tilde{u} \leq t \})$ from Section 2. First we observe that if $\tilde{E}_{t_{1}}$ fails to be strictly outward minimizing for some $t_{1} \in [0,T)$ (See Definition 3 in the following section), or equivalently that $\tilde{E}^{+}_{t_{1}} \neq \tilde{E}^{+}_{t_{1}}$, then the classical solution $N_{t}$ cannot fully escape the minimizing hull $\tilde{E}^{+}_{t_{1}}$ of $\tilde{E}_{t_{1}}$ before the time $T$ without self-intersecting.
\begin{lemma}[No Escape Lemma] \thlabel{escape}
Let $\{ N_{t} \}_{t \in [0,T)}$ be a solution to \eqref{IMCF} with $N_{t}$ a connected, embedded hypersurface for each $t \in [0,T)$, and $E_{t}$ as in Theorem 3. Let $\tilde{u}: \mathbb{R}^{n+1} \rightarrow \mathbb{R}$ be the variational solution to IMCF with initial condition $E_{0}$ and precompact $\tilde{E}_{t}$. Suppose there exists a time $t_{1} \in [0,T)$ so that $\tilde{E}_{t_{1}} \neq \tilde{E}^{+}_{t_{1}}$. Then there does not exist a time $t_{2} > t_{1}$ in $[0,T)$ so that $\overline{\tilde{E}^{+}_{t_{1}}} \subset E_{t_{2}}$. 
\end{lemma}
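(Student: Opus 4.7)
The plan is to derive a contradiction from the smoothness of the classical flow combined with the presence of a jump in the weak solution at time $t_1$. The key observation is that the \emph{jump region} $R := \tilde{E}^{+}_{t_{1}} \setminus \tilde{E}_{t_{1}}$ has positive Lebesgue measure and $\tilde{u} \equiv t_1$ on it, yet under the hypothesis $\overline{\tilde{E}^{+}_{t_{1}}} \subset E_{t_{2}}$, $R$ must intersect the classically swept region $U := \bigcup_{0 < s < T} N_s$ in a set of positive measure, where $\tilde u$ is forced to coincide with the smooth classical level-set function.

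First, I would use \thref{embed} to produce a smooth variational solution on $U$: the function $v : U \to \mathbb{R}$ defined by $v(x) = t$ for $x \in N_t$ is smooth, has nonvanishing gradient $|\nabla v| = H > 0$, and solves \eqref{weak} in the variational sense on the open set $U$. Since $E_0 \cap U = \varnothing$ and $v > 0$ on $U$, both $\{v < 0\} \cap U$ and $\{\tilde u < 0\} \cap U$ are empty. Applying the comparison principle from \cite{huisken} on $U$ in both directions---using that $\{v < t\} \cap U$ is bounded, hence precompact in $\mathbb{R}^{n+1}$, and that the sublevel sets of $\tilde u$ are precompact by hypothesis---then forces $v \equiv \tilde u$ on $U$.

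Next, I would locate $R$ relative to $U$. Assuming for contradiction that $\overline{\tilde{E}^{+}_{t_{1}}} \subset E_{t_{2}}$, and noting that $\tilde E_{0} = E_{0} \subset \tilde E_{t_{1}}$ is disjoint from $R$, I would derive
\[
R \;\subset\; E_{t_{2}} \setminus E_{0} \;=\; N_{0} \;\cup\; \bigcup_{0 < s < t_{2}} N_{s} \;\subset\; N_{0} \cup U.
\]
Since $N_{0}$ has $(n+1)$-dimensional Lebesgue measure zero and $R$ has positive measure, $R \cap U$ has positive measure. But on $R \cap U$ one would have $v = \tilde u \equiv t_{1}$, contradicting the fact that $\{v = t_{1}\} = N_{t_{1}}$ is an $n$-dimensional smooth hypersurface.

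The main obstacle is the comparison step: verifying the precompactness hypothesis for level sets restricted to $U$, and confirming that the restriction of $\tilde u$ to $U$ is still a variational solution in the sense of \thref{variational}. A secondary subtlety is justifying that $R$ has positive Lebesgue measure; this I would draw from the Huisken--Ilmanen theory, where $\tilde{E}^{+}_{t_{1}}$ is by construction the strictly outward minimizing hull of $\tilde{E}_{t_{1}}$, so the failure of the equality $\tilde{E}^{+}_{t_{1}} = \tilde{E}_{t_{1}}$ forces a genuine set-theoretic (and hence measure-theoretic) gap between them.
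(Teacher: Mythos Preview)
Your central claim---that the comparison principle forces $v\equiv\tilde u$ on $U$---is not merely unverified, it is \emph{false} in precisely the situation the lemma addresses. Take any $E_0$ that is not outward minimizing (e.g.\ the example in Section~4, or a thin torus). Then the weak solution jumps immediately: $\tilde E_0^+\neq\tilde E_0=E_0$, so $\tilde u\equiv 0$ on the positive-measure set $R=\tilde E_0^+\setminus E_0$. Meanwhile the classical flow exists and is embedded on some $[0,T)$, and $v>0$ throughout $U$. If any part of $R$ is swept by the classical flow (which it typically is---for the torus, the hole is swept as the inner ring moves toward the axis), then $v>\tilde u$ there. So $v\equiv\tilde u$ cannot hold. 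The problem is not the ``precompactness hypothesis'' you flag: the form of Theorem~2.2(i) in \cite{huisken} used in this paper requires $\{v>\tilde u\}\subset\subset U$, and that set accumulates on $N_0\subset\partial U$ when $t_1=0$; the initial-value form 2.2(ii) requires $v$ to be a weak solution on all of $\mathbb{R}^{n+1}\setminus E_0$, which it is not when $T<\infty$. In short, two distinct variational solutions on $U$ can share the same initial condition and both have bounded sublevel sets---uniqueness only singles out the one whose sublevels are precompact \emph{globally}, and that one is $\tilde u$, not $v$.

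The paper's proof keeps your correct observation that $|\nabla v|>0$ forbids $v$ from being constant on a fat set, but exploits it differently. After reducing to the \emph{first} jump time (via the Smooth Start Lemma, so that $N_{t_1}=\partial\tilde E_{t_1}$), it invokes the minimizing-hull area identity $|\partial\tilde E_{t_1}^+|=|\partial\tilde E_{t_1}|=|N_{t_1}|$. Under the contradiction hypothesis the jump region $R=\tilde E_{t_1}^+\setminus\tilde E_{t_1}$ lies in the smoothly foliated set, so one integrates the level-set equation $\operatorname{div}(\nabla v/|\nabla v|)=|\nabla v|$ over $R$ and applies the divergence theorem:
\[
0<\int_R|\nabla v|\;=\;\int_{\partial\tilde E_{t_1}^+}\frac{\nabla v}{|\nabla v|}\cdot\nu-\int_{N_{t_1}}\frac{\nabla v}{|\nabla v|}\cdot\nu\;\le\;|\partial\tilde E_{t_1}^+|-|N_{t_1}|\;=\;0,
\]
a contradiction. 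Thus the paper never compares $v$ with $\tilde u$ pointwise; it uses only the smooth equation for $v$ together with the area equality coming from the minimizing-hull property of $\tilde E_{t_1}^+$.
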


\begin{proof}
We proceed by contradiction. Define 
\begin{equation*}
    t_{1}=\inf \{t \geq 0 | \tilde{E}^{+}_{t} \neq \tilde{E}_{t} \}
\end{equation*}

By the Smooth Start Lemma 2.4 and Minimizing Hull Property 1.4 of $\tilde{E}_{t}^{+}$ from \cite{huisken}, we know for the classical solution $N_{t}$ that $N_{t}=\partial \tilde{E}_{t}$ for $t < t_{1}$. We claim that $\tilde{E}_{t_{1}} \neq \tilde{E}^{+}_{t_{1}}$. By (1.10) from \cite{huisken}, $\partial \tilde{E}_{t} = N_{t} \rightarrow \partial \tilde{E}_{t_{1}} = N_{t_{1}}$ in $C^{1,\beta}$ as $t \nearrow t_{1}$. If $\tilde{E}_{t_{1}} = \tilde{E}^{+}_{t_{1}}$, we would have since $H>0$ on $\partial \tilde{E}_{t_{1}} = N_{t_{1}}$ that $\partial \tilde{E}_{t}=N_{t}$ over some interval $[t_{1},t_{1} + \epsilon)$ by the Smooth Start Lemma. This would mean $\tilde{E}_{t}=\tilde{E}^{+}_{t}$ over $[t,t+\epsilon)$ since $N_{t}= \partial \tilde{E}_{t} \rightarrow N_{t_{0}}= \partial \tilde{E}^{+}_{t_{0}}$ in $C^{1,\beta}$ as $t \searrow t_{0}$ in $[t_{1},t_{1} + \epsilon)$ by the second part of (1.10). So W.L.O.G. we prove the result for $\tilde{E}^{+}_{t_{1}}$, as the $\tilde{E}^{+}_{t}$'s are nested in time.
$\tilde{E}_{t_{1}}^{+} \setminus \overline{\tilde{E}_{t_{1}}}$ is open by definition and nonempty by assumption, so it must have positive Hausdorff Measure. Furthermore, according to the Minimizing Hull Property 1.4(iv) and Exponential Growth Lemma 1.6 from \cite{huisken}, we have

\begin{equation} \label{equality}
    |N_{t_{1}}|= |\partial \tilde{E}_{t_{1}}|= |\partial \tilde{E}^{+}_{t_{1}}|=e^{\frac{t}{n}} |\partial E_{0}|.
\end{equation}

If there exists a $t_{2} \in [0,T)$ with $\overline{\tilde{E}^{+}_{t_{1}}} \subset E_{t_{2}}$, then take the domain $U=E_{t_{2}} \setminus E_{0}$. According to \thref{embed}, the classical solution $\{N_{t}\}_{t \in [0,t_{2})}$ induces a variational solution $u$ with nonvanishing gradient over $U$. If $\partial \tilde{E}^{+}_{t_{1}} \subset U$ we would have, in view of the positivity of $|\nabla u|$, positivity of $|\tilde{E}_{t_{1}}^{+} \setminus \tilde{E}_{t_{1}}|$, and the Divergence Theorem that

\begin{eqnarray*}
    0 &<& \int_{\tilde{E}^{+}_{t_{1}} \setminus \tilde{E}_{t_{1}}} |\nabla u| = \int_{\tilde{E}^{+}_{t_{1}} \setminus \tilde{E}_{t_{1}}} \text{div}(\frac{\nabla u}{|\nabla u|}) \\  &=& \int_{\partial \tilde{E}^{+}_{t_{1}}} \frac{\nabla u}{|\nabla u|} \cdot \nu + \int_{N_{t_{1}}} \frac{\nabla u}{|\nabla u|} \cdot \nu \\ &\leq& |\partial \tilde{E}^{+}_{t_{1}}| - |N_{t_{1}}|, 
\end{eqnarray*}

but this contradicts the equality \eqref{equality}. Conclude then that we must have $\overline{\tilde{E}^{+}_{t_{1}}} \not \subset E_{t}$ for any $t \in [0,T)$.
\end{proof}

\begin{remark}
This paper's author originally found this result for weak IMCF in an earlier version of \cite{mattia}, where it was shown instead using p-harmonic potentials. However, their proof of this theorem appears to have since been removed from \cite{mattia} for the sake of brevity.
\end{remark}

Next we confine the minimizing hull of some $\tilde{E}_{t}$ which is not strictly outward minimizing to a ball in $\mathbb{R}^{n+1}$ depending only on initial data.
\begin{lemma} \thlabel{ball}
Let $E_{0} \subset \mathbb{R}^{n+1}$ be an open bounded domain with $C^{2}$ boundary $N_{0}$, and let $\tilde{u}: \mathbb{R}^{n+1} \rightarrow \mathbb{R}$ be the variational solution with initial condition $E_{0}$ and precompact $\tilde{E}_{t}$. Choose $0 \in \mathbb{R}^{n+1}$ so that $E_{0} \subset B_{\frac{\text{diam}(N_{0})}{2}}(0)$. Then for each $t\geq 0$, we have $\tilde{E}^{+}_{t} \subset B_{e^{\frac{t}{n}}\frac{\text{diam}(N_{0})}{2}}(0)$. In particular, if $E^{+}_{t_{1}} \neq E_{t_{1}}$ for some $t_{1} \in \mathbb{R}$, then $E^{+}_{t_{1}} \subset B_{\frac{R^{-1}}{2}(\text{diam}(N_{0}))^{2}}(0)$, where $R$ is the inradius of $N_{0}$.
\end{lemma}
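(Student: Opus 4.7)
The plan is to dominate $\tilde{E}_t$ by a concentrically expanding ball via the variational comparison principle, then lift this containment from $\tilde{E}_t$ to its minimizing hull $\tilde{E}_t^+$ via the minimizing-hull characterization. For the first step, I would observe that the radial function $v(x) = n\log(2|x|/\text{diam}(N_0))$ is the classical (hence variational) solution to IMCF emanating from $B_{\text{diam}(N_0)/2}(0)$; its sublevel sets $\{v < t\}$ are precisely the concentric balls $B_{e^{t/n}\text{diam}(N_0)/2}(0)$, so they are precompact for every $t$. Since $E_0 \subset B_{\text{diam}(N_0)/2}(0)$ by our choice of origin, the comparison principle recorded in Section 2 (Theorem 2.2 of Huisken--Ilmanen) gives $\tilde{E}_t \subset B_{e^{t/n}\text{diam}(N_0)/2}(0)$ for every $t \geq 0$.

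Next I would invoke the fact that balls in $\mathbb{R}^{n+1}$ are strictly outward minimizing, which is an immediate consequence of the Euclidean isoperimetric inequality applied to concentric enlargements. By the minimizing hull property (Property 1.4 of Huisken--Ilmanen), $\tilde{E}_t^+$ is contained in every strictly outward-minimizing open set that contains $\tilde{E}_t$. Applying this with the competitor $B_{e^{t/n}\text{diam}(N_0)/2}(0)$ furnished by the previous step yields the first conclusion $\tilde{E}_t^+ \subset B_{e^{t/n}\text{diam}(N_0)/2}(0)$.

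For the second assertion, suppose $\tilde{E}_{t_1}^+ \neq \tilde{E}_{t_1}$; I claim this forces $t_1 < t^* = n\log(R^{-1}\text{diam}(N_0))$. Indeed, \thref{existence} guarantees that for $t \geq t^*$ the set $\tilde{E}_t$ is star-shaped and the flow is smooth and classical, so $\tilde{u}$ has smooth level sets with nonvanishing gradient in that regime; consequently $\{\tilde{u} < t\}$ and $\text{Int}\{\tilde{u} \leq t\}$ coincide, i.e., $\tilde{E}_t = \tilde{E}_t^+$. Inserting $t_1 < t^*$ into the first part gives
\begin{equation*}
\tilde{E}_{t_1}^+ \subset B_{e^{t_1/n}\text{diam}(N_0)/2}(0) \subset B_{e^{t^*/n}\text{diam}(N_0)/2}(0) = B_{R^{-1}(\text{diam}(N_0))^2/2}(0),
\end{equation*}
which is the claimed bound.

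The main obstacle I anticipate is making precise that the variational comparison principle indeed orders $\tilde{u}$ against the explicit radial solution (both must be checked to be precompact variational solutions with correctly ordered initial data) and verifying carefully that balls qualify as strictly outward-minimizing competitors in the sense used to define $\tilde{E}_t^+$. Once these two points are in place, the proof is simply the combination of the ball comparison with the star-shapedness waiting time from \thref{existence}.
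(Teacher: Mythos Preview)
Your proposal is correct and follows essentially the same structure as the paper: compare $\tilde{E}_t$ with the expanding spheres $F_t = B_{e^{t/n}\text{diam}(N_0)/2}(0)$ via Theorem~2.2 of \cite{huisken}, then use \thref{existence} to force $t_1 < t^*$ for the second assertion.

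The one genuine difference is in how you upgrade the containment from $\tilde{E}_t$ to $\tilde{E}_t^+$. You invoke the characterization of $\tilde{E}_t^+$ as the strictly minimizing hull of $\tilde{E}_t$ (Property~1.4 in \cite{huisken}) together with the fact that balls are strictly outward minimizing, so that $\tilde{E}_t^+$ sits inside any such ball containing $\tilde{E}_t$. The paper instead uses only the elementary nesting $\tilde{E}_{t_1}^+ = \text{Int}\{\tilde u \le t_1\} \subset \{\tilde u < t\} = \tilde{E}_t$ for $t>t_1$, applies the sphere comparison at the sequence $t_i = t_1 + n\log(1+i^{-1}) \searrow t_1$, and intersects. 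Your route is conceptually cleaner and avoids the limiting step, at the cost of invoking the minimizing-hull machinery; the paper's route stays closer to the raw definition of $\tilde{E}_t^+$ and needs no outward-minimizing property of balls. Either way the conclusion is immediate.
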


\begin{proof}
Observe that the sets $F_{t}=B_{e^{\frac{t}{n}}\frac{\text{diam}(N_{0})}{2}}(0)$ define a variational solution of IMCF with compact level sets and $E_{0} \subset F_{0}$, so $\tilde{E}_{t} \subset F_{t}$ by Theorem 2.2(ii) of \cite{huisken}. In the case that $\tilde{E}_{t_{1}} \neq \tilde{E}^{+}_{t_{1}}$ for some $t_{1} \geq 0$, we show $\tilde{E}^{+}_{t_{1}}$ remains contained in $F_{t_{1}}$. By definition $\tilde{E}^{+}_{t_{1}} \subset \tilde{E}_{t}$ for $t > t_{1}$. Then choosing the sequence 
\begin{equation*}
    \{t_{i}=t_{1} + n \ln{(1+i^{-1})}\}_{i=1}^{\infty},
\end{equation*}
 we have the inclusion $\tilde{E}^{+}_{t_{1}} \subset F_{t_{i}} =B_{(1 + i^{-1}){e^{\frac{t_{1}}{n}}}\frac{\text{diam}(N_{0})}{2}}(0)$. Thus 
 
 \begin{equation*}
     \tilde{E}^{+}_{t_{1}} \subset \text{Int}(\cap_{i=1}^{\infty} F_{t_{i}}) = B_{e^{\frac{t_{1}}{n}}\frac{\text{diam}(N_{0})}{2}}(0)
 \end{equation*}

For the second part of the statement, according to \thref{existence}, $\partial \tilde{E}_{t}$ is star-shaped whenever $t \geq t^{*}$. Thus $\tilde{u}$ is $C^{1}$ with $|\nabla \tilde{u}| \neq 0$ over $\mathbb{R}^{n+1} \setminus \tilde{E}_{t^{*}}$ by Theorem 0 for star-shaped hypersurfaces in \cite{gerhardt} and uniqueness. Therefore, we cannot have $\tilde{u}=t_{0}$ over a positive measure set for $t_{0} \geq t^{*}$, so $\tilde{E}_{t}=\tilde{E}^{+}_{t}$ for these times. So if $\tilde{E}^{+}_{t_{1}} \neq \tilde{E}_{t_{1}}$ then $t_{1} < t^{*}$, meaning $\tilde{E}^{+}_{t_{1}} \subset F_{t^{*}} = B_{R^{-1}(\text{diam}(N_{0}))^{2}}(0)$.

\end{proof}
Combining Lemmas 1 and 2 reveals that if the classical solution $N_{t}$ escapes the ball $B_{\frac{R^{-1}}{2}(\text{diam}(N_{0}))^{2}}(0)$ while remaining embedded, then we must have $\tilde{E}_{t}=\tilde{E}^{+}_{t}$ inside this ball. This is sufficient to ensure $N_{t}= \partial \tilde{E}_{t}$, making $N_{t}$ star-shaped beyond the time $t^{*}$. For the main theorem of this section, we estimate the time this escape takes to occur. This theorem both establishes global existence, embeddedness, and rapid convergence to spheres for $N_{t}$ existing and remaining embedded for a time greater than $2t^{*}$, and establishes the formation of singularities and self-intersections within the time $2t^{*}$ for $N_{0}$ without spherical topology. The latter is akin to the well-known upper bound on extinction time for closed surfaces moving by MCF.

\begin{theorem} {(Singularity Formation and Self-Intersection for IMCF)} \thlabel{alternative}
Let $\{N_{t}\}_{t \in [0,T_{\text{max}}]}$ be a solution to \eqref{IMCF}, where $N_{0}$ is a connected hypersurface and $T_{\text{max}}$ is the maximal time of existence. Then one of the following alternatives holds:

\begin{enumerate}
    \item $T_{\text{max}}= \infty$ and $N_{t}$ is embedded for every $t \in [0,T_{\text{max}})$. Furthermore, $N_{t}$ is star-shaped for any $t \geq t^{*}= n\log{(R^{-1} \text{diam} (N_{0}))}$, where $R$ is the radius of the largest ball enclosed by $N_{0}$.
    \item $N_{t}$ develops either a singularity or a self-intersection within the time interval $[0,2t^{*}]$ for $t^{*}$ defined above.
\end{enumerate}
\end{theorem}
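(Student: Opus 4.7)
The plan is to prove the theorem by contrapositive: assuming alternative 2 fails, so that $\{N_t\}$ exists classically and remains embedded on all of $[0,2t^*]$, I will show that these flow surfaces coincide with the level sets of the global variational solution $\tilde{u}$ from \thref{existence}, whence Gerhardt's long-time existence theorem produces alternative 1.

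\textbf{Step 1 (strict outward minimization for $\tilde{u}$).} Suppose for contradiction that $\tilde{E}_{t_1}\neq \tilde{E}^{+}_{t_1}$ for some $t_1 \geq 0$. By \thref{ball}, $\tilde{E}^{+}_{t_1} \subset B_\rho(0)$ with $\rho := \tfrac{1}{2} R^{-1}(\mathrm{diam}\,N_0)^2$, where $0$ is the midpoint of the distance-maximising pair on $N_0$. Pick an inscribed ball $B_R(x) \subset E_0$; its classical IMCF evolution is the concentric ball $B_{Re^{t/n}}(x)$, so \thref{avoidance} gives $\overline{B_{Re^{t/n}}(x)} \subset E_t$ throughout $[0,T_{\max})$. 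At $t = 2t^*$ the radius equals $R e^{2t^*/n} = 2\rho$. Since $x \in E_0 \subset B_{\mathrm{diam}(N_0)/2}(0)$ and $\mathrm{diam}(N_0) \geq 2R$, we have $|x| \leq \mathrm{diam}(N_0)/2 \leq \rho$, and hence $\overline{B_\rho(0)} \subset \overline{B_{2\rho}(x)} \subset E_{2t^*}$. Thus $\overline{\tilde{E}^{+}_{t_1}} \subset E_{2t^*}$, contradicting \thref{escape}. Conclude $\tilde{E}_t = \tilde{E}^{+}_t$ for every $t \geq 0$.

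\textbf{Step 2 (classical $=$ variational).} By \thref{embed}, the function $u(x) := t$ for $x \in N_t$ is a variational solution on $U := \bigcup_{0 \leq t \leq 2t^*} N_t$ with initial condition $E_0$. Because $\tilde{E}_t = \tilde{E}^{+}_t$ for all $t$ and $\tilde{u}$ has precompact sublevel sets, the Smooth Start Lemma and the uniqueness statement of Huisken--Ilmanen force $u = \tilde{u}$ on $U$; in particular $N_t = \partial \tilde{E}_t$ for each $t \in [0, 2t^*]$.

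\textbf{Step 3 (conclusion).} By \thref{existence}, $N_{t^*} = \partial \tilde{E}_{t^*}$ is star-shaped and at least $C^2$. Gerhardt's theorem \cite{gerhardt} then yields a smooth classical IMCF starting from $N_{t^*}$ defined for all $t \geq t^*$, remaining embedded and star-shaped, and converging homothetically to a sphere as $t \to \infty$. Combined with the hypothesised classical existence on $[0,t^*]$, this gives alternative 1. The main obstacle is Step 2: leveraging strict outward minimization to match the classical flow with the level-set flow of $\tilde{u}$, so that star-shapedness of $\partial \tilde{E}_{t^*}$ transfers to $N_{t^*}$. The factor of two in $2t^*$ falls out naturally of Step 1: one $t^*$ is spent by the inscribed ball reaching the outer scale of $N_0$, and a second $t^*$ is needed to engulf the worst-case minimizing hull provided by \thref{ball}.
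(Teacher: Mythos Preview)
Your argument is correct and follows the paper's proof essentially step for step: the same contradiction via \thref{ball} and the No-Escape Lemma in Step~1, the same identification $N_t=\partial\tilde{E}_t$ in Step~2, and the same appeal to \thref{existence} and Gerhardt in Step~3. The only place you compress is Step~2, where the paper spells out an open--closed continuation argument (using the Smooth Start Lemma together with $\tilde{E}_t=\tilde{E}^{+}_t$) rather than invoking ``uniqueness'' directly; since $u$ is only defined on the partial region $U$, Huisken--Ilmanen's uniqueness theorem does not apply verbatim, but you have named the correct ingredients and the fix is routine.
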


 \thref{alternative} implies that strictly embedded solutions of \eqref{IMCF} which develop singularities do so within a prescribed time interval. Furthermore, this result sharply characterizes the behavior for initial data without spherical topology.
\begin{corollary} \thlabel{singularity}
Suppose a connected hypersurface $N_{0}$ is not homeomorphic to $\mathbb{S}^{n}$. Then the corresponding solution $N_{t}$ to IMCF develops either a singularity or a self-intersection by the time $2t^{*}= 2n\log{(R^{-1} \text{diam} (N_{0}))}$
\end{corollary}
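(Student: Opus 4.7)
The plan is to deduce this corollary directly from \thref{alternative} by ruling out the first alternative. Since \thref{alternative} presents a dichotomy—either global smooth embedded existence with eventual star-shapedness, or a singularity/self-intersection within $[0, 2t^*]$—it suffices to show that the hypothesis on topology is incompatible with the first alternative.

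The key observation is that under the hypotheses of alternative (1), the classical solution $F_t : N \times [0, \infty) \to \mathbb{R}^{n+1}$ is a smooth family of embeddings for every $t \geq 0$. In particular, for any fixed $t_0 \geq t^*$, the map $F_{t_0} : N \to N_{t_0}$ is a diffeomorphism, so $N$ (and hence $N_0 = F_0(N)$) is diffeomorphic to $N_{t_0}$. But alternative (1) further asserts that $N_{t_0}$ is star-shaped, and any closed star-shaped hypersurface in $\mathbb{R}^{n+1}$ is diffeomorphic to $\mathbb{S}^n$ via radial projection from the origin. This would force $N_0 \cong \mathbb{S}^n$, contradicting the hypothesis that $N_0$ is not homeomorphic to $\mathbb{S}^n$.

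Therefore alternative (1) fails, and we must be in alternative (2), which is precisely the conclusion of the corollary. The argument is essentially a one-line deduction from \thref{alternative}, so there is no serious obstacle; the only minor point worth stating carefully in the write-up is the well-known fact that a closed star-shaped hypersurface is diffeomorphic to a sphere, which follows by showing the radial graph representation provides the explicit diffeomorphism. I would keep the proof to a single short paragraph, invoking \thref{alternative}, noting the diffeomorphism of flow surfaces under embedded smooth flow, and citing the star-shaped-implies-spherical-topology fact to close the argument.
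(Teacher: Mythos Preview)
Your proposal is correct and matches the paper's own argument essentially verbatim: the paper likewise invokes \thref{alternative}, observes that under alternative (1) the flow surfaces $N_t$ are star-shaped (hence topological spheres) for large $t$, and uses embeddedness of the flow to transfer this topology back to $N_0$, yielding the contradiction. The only cosmetic difference is that the paper phrases the topology transfer via ``embedded for all times'' rather than explicitly citing the diffeomorphism $F_{t_0}:N\to N_{t_0}$, but the content is identical.
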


\begin{proof} Let $E_{0}$ be the set enclosed by $N_{0}$, $\tilde{u}: \mathbb{R}^{n+1} \rightarrow \mathbb{R}$ be the variational solution with initial condition $E_{0}$ and precompact $\tilde{E}_{t}$, and $0 \in \mathbb{R}^{n+1}$ be chosen so that $E_{0} \subset B_{\frac{\text{diam}(N_{0})}{2}}(0)$. Suppose that the classical solution $\{N_{t}\}_{t \in [0,T)}$ to \eqref{IMCF} with initial data $N_{0}$ exists and is embedded a time $T > 2t^{*}$. We claim then that the global solution $\tilde{u}$ satisfies $\tilde{E}_{t}=\tilde{E}^{+}_{t}$ for each $t \geq 0$, and we establish this by contradiction. Take a nonnegative time $t_{1}$ so that $\tilde{E}_{t_{1}} \neq \tilde{E}^{+}_{t_{1}}$. \thref{ball} states that $\tilde{E}_{t_{1}} \subset B_{\frac{R^{-1}}{2}(\text{diam}(N_{0}))^{2}}(0)$. 

On the other hand, we may take $x \in E_{0}$ so that $B_{R}(x) \subset E_{0}$. The classical solution $\{N_{t}\}_{0 \leq t < T}$ induces a variational solution $u$ over $U=E_{T} \setminus E_{0}$, with $E_{t}$ being as in \thref{avoidance}. By the Comparison Principle 2.2 of \cite{huisken}, we must have $B_{Re^{\frac{t}{n}}} (x) \subset E_{t}$. However, evaluating at $t=2t^{*}$ we get $B_{Re^{\frac{2t^{*}}{n}}} (x)=B_{R^{-1} (\text{diam}(N_{0}))^{2}}(x)$. Then we have the containment

\begin{equation*}
    \overline{\tilde{E}^{+}_{t_{1}}} \subset \overline{B_{\frac{R^{-1}}{2}(\text{diam}(N_{0}))^{2}}(0)} \subset B_{R^{-1} (\text{diam}(N_{0}))^{2}} (x) \subset E_{2t^{*}}.
\end{equation*}
This contradicts the No-Escape Lemma. Thus we know $\tilde{E}_{t} = \tilde{E}^{+}_{t}$ for $t \geq 0$. Letting $\mathcal{A} \subset [0,T)$ be the largest interval containing $0$ over which $\partial \tilde{E}_{t}=N_{t}$, we then have $\sup{A}>0$ by Lemma 2.4 in \cite{huisken}. If $\tilde{t}=\sup{A} < T$, we would have that $E_{\tilde{t}}=\tilde{E}_{\tilde{t}}=\tilde{E}^{+}_{\tilde{t}}$ by the above result. Then since $H>0$ on $\partial \tilde{E}_{\tilde{t}}=N_{\tilde{t}}$, Lemma 2.4 and Property 1.4 would once again imply $N_{t}=\partial \tilde{E}_{t}$ over some larger interval $t \in [0,\tilde{t}+\epsilon)$. Conclude then that $\tilde{t}=T$, i.e. that $\partial \tilde{E}_{t}=N_{t}$ over $[0,T)$.

$\partial \tilde{E}_{t}$ is star-shaped for $t \geq t^{*}$ by \thref{existence}, so by Theorem 0 of \cite{gerhardt} and continuation, we must have for $T_{\max}=+\infty$ and $N_{t}$ embedded for all times. The alternative is then that $N_{t}$ does not exist or remain embedded past the time $2t^{*}$. For \thref{singularity}, solutions which satisfy the first alternative are star-shaped and therefore topological spheres for any $t>2t^{*}$, but since they are also embedded for all times $[0,T)$, this implies that $N_{0}$ must also be a topological sphere. Thus any initial surface without spherical topology necessarily satisfies the second alternative.

\end{proof}
\begin{remark}
From \cite{gerhardt}, star-shaped data are known to homothetically converge to spheres, so Theorem 4 shows that the sphere is the unique blow-down limit of embedded solutions to $\eqref{IMCF}$ which exist at least for the time $2t^{*}$.
\end{remark}
\section{Intersections and Singularities for Topological Spheres}

To conclude this note, we will prove the following:

\begin{theorem} \thlabel{rev}
There exists an $H>0$ $N_{0}^{n} \subset \mathbb{R}^{n+1}$ with spherical topology which either self-intersects or develops a singularity within the time $T_{\text{max}} \leq 2 t_{*}$ under IMCF for the time $t_{*}$ given in \thref{existence}.
\end{theorem}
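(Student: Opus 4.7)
The plan is to construct an explicit mean-convex topological sphere $N_0 \subset \mathbb{R}^{n+1}$ whose enclosed region $E_0$ fails to be strictly outward-minimizing, and then to invoke the argument of Corollary \thref{singularity} verbatim.

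For the construction I would take $N_0$ to be a rotationally symmetric ``peanut'' obtained by rotating a smooth, non-negative, even profile $f:[-L',L']\to[0,\infty)$ about the axis in $\mathbb{R}^{n+1}$. The profile vanishes smoothly at the endpoints (so that $N_0$ is a closed topological sphere), is approximately equal to $\rho$ on two intervals near $\pm L'$ forming two nearly-spherical bulges of radius $\rho$, and equals $\epsilon\ll\rho$ on a short cylindrical neck of length $L$ between them. For such a surface of revolution in $\mathbb{R}^{n+1}$ the mean curvature is $H=-f''/(1+f'^2)^{3/2}+(n-1)/(f\sqrt{1+f'^2})$, so $H>0$ reduces to the pointwise inequality $f''(x)<(n-1)(1+f'(x)^2)/f(x)$. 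This holds on the bulges ($H\approx n/\rho$), on the neck ($H=(n-1)/\epsilon$), and is preserved through any sufficiently gentle monotone transition between them.

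To show $\tilde{E}_0^+\supsetneq E_0$ it suffices to exhibit one open $F\supsetneq E_0$ with $|\partial F|<|\partial E_0|$. I would take $F$ to be the convex hull of $E_0$, an $(n+1)$-dimensional capsule consisting of a cylinder of radius $\rho$ and length $L$ capped by two hemispheres of radius $\rho$. Its hypersurface area is of order $\omega_n\rho^n+\omega_{n-1}\rho^{n-1}L$, while $|\partial E_0|$ is of order $2\omega_n\rho^n+\omega_{n-1}\epsilon^{n-1}L$. For $\epsilon\ll\rho$ and $L<c_n\rho$ where $c_n=\omega_n/\omega_{n-1}$ is a positive dimensional constant, the capsule strictly beats $E_0$ in hypersurface area, yielding the desired failure of outward-minimizing.

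Given such an $N_0$, suppose for contradiction that the classical solution $\{N_t\}_{t\in[0,T)}$ exists and remains embedded for some $T>2t^*$. Choose the origin as in \thref{ball} so $E_0\subset B_{\text{diam}(N_0)/2}(0)$. Since $\tilde{E}_0\neq\tilde{E}_0^+$, \thref{ball} applied at $t_1=0$ gives $\tilde{E}_0^+\subset B_{R^{-1}(\text{diam}(N_0))^2/2}(0)$, where $R$ is the inradius of $N_0$; on the other hand, the inscribed ball $B_R(x)\subset E_0$ grows to $B_{R^{-1}(\text{diam}(N_0))^2}(x)\subset E_{2t^*}$ by the Huisken--Ilmanen ball comparison, and since $|x|<\text{diam}(N_0)/2$ this latter ball contains $\overline{\tilde{E}_0^+}$, contradicting the No-Escape Lemma \thref{escape}. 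Thus $T_{\max}\leq 2t^*$ or else $N_t$ self-intersects within that time. The main obstacle is the quantitative balancing in the construction: maintaining $H>0$ on the smooth transition between the neck and the bulges limits how sharply $f$ may bend, while $|\partial F|<|\partial E_0|$ constrains both $L/\rho$ and $\epsilon/\rho$. Both constraints can be met simultaneously for any $n\geq 2$ by explicit parameter choices, but the transition profile must be designed carefully so that $f''$ never exceeds $(n-1)(1+f'^2)/f$ and the area bookkeeping for the convex hull comparison still closes.
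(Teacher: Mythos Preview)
Your overall strategy—build a mean-convex topological sphere that is not outward-minimizing, then combine \thref{escape} with inscribed-ball growth to force a contradiction past time $2t^*$—is exactly the paper's, and your final paragraph applying the machinery is correct. The gap is in the area comparison for your dumbbell. You take the competitor $F$ to be the convex hull and assert its cylindrical part has length $L$, the neck length. But the cylinder of the convex hull runs between the \emph{equators} of the two bulges; since each nearly-spherical bulge of radius $\rho$ extends about $\rho$ inward from its equator toward the neck, that cylinder actually has length roughly $L+2\rho$. With this correction the capsule area is $\omega_n\rho^n+\omega_{n-1}\rho^{n-1}(L+2\rho)$, and $|\partial F|<2\omega_n\rho^n$ would force $L+2\rho<(\omega_n/\omega_{n-1})\,\rho$. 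Since $\omega_n/\omega_{n-1}\leq 2$ for every $n\geq 2$ (with equality at $n=2$ and strict inequality thereafter), this has no solution with $L>0$: the convex-hull competitor never beats the dumbbell in the relevant dimensions, so your bookkeeping cannot close.

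The paper's construction sidesteps this comparison entirely. It starts with two disjoint balls close enough that their union is not outward-minimizing, proves that the minimal bridge in their strictly minimizing hull is confined to the slab between the two equators, and then connects the balls by a fat tube (radius close to the ball radius) that exits through the \emph{outer} poles and loops around, staying outside that slab. Because the tube never enters the bridge region, adjoining the bridge to the new $E_0$ immediately gives a competitor with strictly less perimeter. Your direct-neck construction can be salvaged, but not with the convex hull: instead compare with the minimizing hull $E'$ of the two bare balls, noting that for $\epsilon$ below the waist radius of $\partial E'$ the thin neck—and hence all of $E_0$—lies inside $E'$, while $|\partial E'|<2\omega_n\rho^n\approx|\partial E_0|$.
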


In particular, this establishes that \thref{singularity} is not an if-and-only-if. Before proceeding, we introduce a definition alluded to in the previous section which the reader familiar with \cite{huisken} can skip:

\begin{definition} A subset $E \subset \mathbb{R}^{n}$ is said to be \textbf{outward minimizing} if for every $F$ containing $E$ with $F \setminus E \subset \subset \mathbb{R}^{n}$ we have $|\partial F| \leq |\partial E|$.

Furthermore, $E$ is \textbf{strictly outward minimizing} if the above inequality is strict for every $F \neq E$.
\end{definition}
One can easily see using equation \eqref{weak} that the flow surfaces $N_{t}$ for an embedded, connected classical solution to IMCF which exists globally are strictly outward minimizing. Indeed, this solution induces a variational solution $u: \mathbb{R}^{n+1} \setminus E_{0} \rightarrow \mathbb{R}$ with $|\nabla u| > 0$ by \thref{embed}. Then given any open set $F$ containing $N_{t}$, we can perform the same integration as in \eqref{equality} from \thref{escape} over $N=F \setminus E_{t}$ using the Divergence Theorem:

\begin{eqnarray*}
    0 &<& \int_{N} |\nabla u| = \int_{N} \text{div}(\frac{\nabla u}{|\nabla u|}) \nonumber \\ 
    &=& \int_{\partial F} \frac{\nabla u}{|\nabla u|} \cdot \nu + \int_{N_{t}} \frac{\nabla u}{|\nabla u|} \cdot \nu \\
    &\leq& |\partial F|- |N_{t}|.
\end{eqnarray*}

\textit{Proof of \thref{rev}} Our construction utilizes the fact that by \thref{escape}, an open set $E_{0}$ with $\partial E_{0}=N_{0}$ must be strictly outward minimizing for the classical flow $N_{t}$ to exist longer than $2t^{*}$. Therefore, we need only construct an $H>0$ topological sphere which is not strictly outward minimizing to assure that its flow develops a finite-time singularity or intersection.

Consider two disjoint balls $B(p,R)$ and $B(-p,R)$ with centerpoints $p=(p_{1}, \dots 0)$ and $-p$ and identical radii $R$, and take the Hausdorff distance $d$ between the balls to be small enough so that their union is not outward minimizing. Take the minimizing hull $E'$ of $E= B(p,R) \cup B(-p,R)$. We seek first to establish some symmetry for $E$:

\begin{proposition} \label{hull} 
$E'$ is rotationally symmetric about the $x_{1}$ axis, and $\partial E' \setminus \partial E$ is a $C^{\infty}$ minimal hypersurface.
\end{proposition}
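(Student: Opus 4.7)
The plan is to obtain the rotational symmetry from uniqueness of the strictly minimizing hull, and to obtain both minimality and smoothness of the free part $\partial E' \setminus \partial E$ by showing that $E'$ is a two-sided local area minimizer there and then invoking standard GMT regularity.

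First I would handle the symmetry. Because $p=(p_1,0,\ldots,0)$ lies on the $x_1$-axis and the two balls constituting $E$ have equal radii, every rotation $\rho$ of $\mathbb{R}^{n+1}$ fixing the $x_1$-axis satisfies $\rho(E)=E$. The Minimizing Hull Property 1.4 in \cite{huisken} determines the strictly minimizing hull $E'$ uniquely up to a Lebesgue null set as the largest precompact outward-minimizing set containing $E$ of prescribed boundary area. Since $\rho(E')$ also contains $\rho(E)=E$ and $|\partial \rho(E')|=|\partial E'|$, uniqueness forces $\rho(E')=E'$, yielding the rotational symmetry about the $x_1$-axis.

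Next I would establish local area-minimization on the free part. Fix $x \in \partial E' \setminus \partial E$; since $E \subset E'$ with $x \in \partial E'$ and $x \notin \partial E$, the point $x$ cannot lie in $\overline E$ (otherwise it would be an interior point of $E'$), so I pick an open ball $U \ni x$ with $\overline U \cap \overline E = \emptyset$. For any set $F$ of finite perimeter with $F \,\triangle\, E' \subset \subset U$, the condition $U \cap E = \emptyset$ forces $F \supset E$, so $F$ is an admissible competitor against $E'$ in the minimizing hull problem. This gives $|\partial^{*} E'| \leq |\partial^{*} F|$, and cancelling the common portion outside $U$ yields $|\partial^{*} E' \cap U| \leq |\partial^{*} F \cap U|$. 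Hence $\partial^{*} E'$ is a two-sided local area minimizer in $U$, so it has vanishing mean curvature in the weak variational sense.

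Finally, regularity follows from classical GMT: locally area-minimizing boundaries are $C^{1,\alpha}$ embedded hypersurfaces away from a singular set of Hausdorff dimension at most $n-7$, and the minimal surface equation bootstraps this to $C^{\infty}$. The rotational symmetry established in the first step reduces the problem to finding a minimal surface of revolution about the $x_1$-axis, which is governed by an ODE for the profile curve and therefore smooth in any dimension, ruling out residual singularities on the free portion. The main obstacle I anticipate is the bookkeeping in the competitor argument — specifically, verifying that arbitrary small perturbations of $E'$ inside $U$ remain in the admissible class for the minimizing hull problem. The disjointness $\overline U \cap \overline E = \emptyset$ is precisely what makes this automatic, so once this set-up is in place the remainder of the proof is routine.
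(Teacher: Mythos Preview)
Your proposal is correct and follows essentially the same route as the paper: uniqueness of the strictly minimizing hull under isometries gives the rotational symmetry, and area-minimizing regularity of the free boundary combined with the rotational symmetry rules out the GMT singular set. The only difference is cosmetic --- the paper excludes singularities via a direct dimension count (a nonempty rotationally invariant singular set would contain an entire $(n-1)$-sphere orbit, contradicting the GMT bound of at most $n-8$), rather than via your ODE reduction for the profile curve.
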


\begin{proof}Rotations $R: \mathbb{R}^{n+1} \rightarrow \mathbb{R}^{n+1}$ about the $x_{1}$ axis are isometries of $\mathbb{R}^{n+1}$, and thus send $E'$ to the minimizing hull of $R(E)$. Since this axis contains that centerpoints of each sphere, we know these $R$ also fix the associated balls, i.e. $R(E)=E$. Then by the uniqueness of strictly minimizing hulls, $E'$ is also the minimizing hull of $R(E)$, implying $R(E')=E'$.

The regularity of $\partial E' \setminus \partial E$ follows from Theorem 5.3(ii) of \cite{mattia} (Also mentioned on page 369 of \cite{huisken}: if the singular set $\text{Sing}(\partial E' \setminus \partial E)$ is nonempty, then its Hausdorff dimension is at least $n-1$ by rotational symmetry. But this dimension cannot exceed $n-8$. Thus $\text{Sing}(\partial E' \setminus \partial E) = \varnothing$, and as the surface is smooth outside $\text{Sing}(\partial E' \setminus \partial E)$ we obtain the regularity. Furthermore, $H=0$ on this surface by (1.15) in \cite{huisken}.

\end{proof}

Next, we are going to show that the bridge joining the two spheres does not extend past their equators. This will allow us to glue the spheres together over regions away from $\partial E' \setminus \partial E$, so that the minimizing hull of the resulting surface must still include this part.

\begin{proposition}\label{hemisphere}
The set $E' \setminus E$ is contained within the set $\{ x \in \mathbb{R}^{n+1} | |x_{1}| \leq p_{1} \}$.
\end{proposition}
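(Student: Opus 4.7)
The plan is to exploit the symmetries of $E'$ together with the minimality of the bridge to reduce the claim to a one-dimensional ODE analysis. Since $E$ is invariant under the reflection $x_1 \mapsto -x_1$, uniqueness of the minimizing hull forces the same symmetry on $E'$, and hence on $\Sigma := \partial E' \setminus \partial E$. Combined with the rotational symmetry about the $x_1$-axis from Proposition \ref{hull}, $\Sigma$ is a surface of revolution whose meridian in the half-plane $\{r \geq 0\}$ can be written as an even graph $r = r(x_1)$ over some interval $[-a,a]$, where $(\pm a, r_a)$ are the circles along which $\Sigma$ joins the two spheres and $r_a = \sqrt{R^2 - (a-p_1)^2}$. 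The target is then $a \leq p_1$, since this gives $E' \setminus E \subset \{|x_1| \leq a\} \subset \{|x_1| \leq p_1\}$.

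By minimality, the meridian satisfies the ODE for graphs of revolution,
\[
r''(x_1) = (n-1)\, \frac{1 + r'(x_1)^2}{r(x_1)},
\]
so $r$ is strictly convex on $[-a,a]$. The left-right symmetry gives $r'(0) = 0$, and hence $r'(a) > 0$. The remaining input is the tangency of $\Sigma$ to $\partial B(p,R)$ at the joining circle $(a, r_a)$, which follows from the standard $C^{1,1}$ regularity of the minimizing hull boundary across the obstacle $\partial E$. Differentiating $(x_1 - p_1)^2 + r^2 = R^2$ at $(a, r_a)$ gives the sphere's meridian slope there as $(p_1 - a)/r_a$, so the tangency condition forces $r'(a) = (p_1 - a)/r_a$. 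Together with $r'(a) > 0$, this yields $a < p_1$, completing the proof.

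The main obstacle is justifying the tangency of $\Sigma$ to $\partial B(p,R)$ at the joining circle; once this is granted, the remaining steps are a short computation. I would either cite the $C^{1,1}$ regularity result for perimeter-minimizing obstacle problems, or give a direct first-variation argument: any transverse (corner) meeting of $\Sigma$ with $\partial E$ could be smoothly rounded off to produce a competitor $F \supseteq E$ with strictly smaller boundary area, contradicting the characterization of $E'$ as a perimeter minimizer among such $F$.
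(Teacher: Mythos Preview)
Your argument is sound for the bridge component but follows a genuinely different route from the paper. The paper first confines all of $E'$ to the solid cylinder $C_R = \{x_2^2 + \cdots + x_{n+1}^2 \le R^2\}$ by a maximum-principle argument on the distance from the $x_1$-axis: at an interior maximum $a > R$ of this distance on $\Sigma$, the $n-1$ rotational principal curvatures equal $1/a$ and the meridian curvature is nonnegative, forcing $H(x_0) > 0$ and contradicting minimality. Since the equatorial disks of the two balls at $|x_1| = p_1$ fill the entire cross-section of $C_R$, no point of $E'\setminus E$ can sit on $\{|x_1| = p_1\}$, and no component of $E'\setminus E$ can lie entirely in $\{|x_1| > p_1\}$ because a single ball is strictly outward minimizing; together these give the claim. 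Your ODE/convexity/tangency route is closer to classical catenoid analysis and even yields the strict inequality $a < p_1$, but it leans on $C^{1,1}$ free-boundary regularity and on the structural assumption that $\Sigma$ is a single bridge whose meridian is a graph $r = r(x_1)$ over $[-a,a]$; the paper's argument needs none of this.

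There is one genuine omission you should patch: your proof as written only treats a component of $\Sigma$ that spans both spheres (so that the reflection symmetry gives $r'(0) = 0$). It does not rule out components of $E'\setminus E$ attached to only one sphere and lying in $\{|x_1| > p_1\}$, where the evenness argument is unavailable. The paper disposes of these in one line via the strict outward-minimizing property of a single ball; you should add the same observation.
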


\begin{proof}
We claim first that $E'$ is contained within the cylinder $C_{R}= \{ x \in \mathbb{R}^{n+1} | x_{2}^{2} + \dots + x_{n +1 }^{2} \leq R^{2}$. Suppose not: define the vector field $\hat{w}=\frac{\vec{y}}{|\vec{y}|}$, where $\vec{y}(x_{1}, \dots, x_{n})= (0, x_{2}, \dots x_{n})$ points radially away from the $x_{1}$ axis, and let $u(\vec{x})= \langle \hat{w}, \vec{x} \rangle$ be the distance from this axis or ``height" of a point $\vec{x} \in \partial E' \setminus \partial E$. Since $E$ is contained within $C_{R}$ and $a=\sup_{\partial E' \setminus \partial E} u > R$ we must have that this supremum occurs at an interior point $x_{0}$ of $\partial E' \setminus \partial E$. The $n-1$ principal curvatures corresponding to rotation all must equal $\frac{1}{a}$ at $x_{0}$, and the other principal curvature must be nonnegative since $x_{0}$ is a local maximum of the height function $u$. Thus $H(x_{0})> 0$, contradicting the minimality of this complement. Thus $\sup_{\partial E' \setminus \partial E} u \leq R$, and therefore $E'$ lies in $C_{R}$.

Now, no connected component of $E' \setminus E$ lies entirely outside $\{ x \in \mathbb{R}^{n+1} | |x_{1}| < p_{1} \}$ since a single ball is strictly outward minimizing. Thus we can have $E' \setminus E$ intersect $\{ |x_{1}| \geq p_{1} \}$ only if $(E' \setminus E) \cap \{ |x_{1}| = p_{1} \} \neq \varnothing$, but this would require $E' \setminus E \not \subset C_{R}$.




\end{proof} 

Now that we have established that the $H=0$ part of $E'$ is contained between the equators of the two spheres, we are ready to construct our example. Our surface will be of class $C^{0}$ before smoothing.

Begin by attaching a cylinder of radius $r$ for some $r<R$ and finite length about the $x_{1}$ axis to the opposite end of the sphere in the $x_{1} <0$ plane (See diagram). Then attach one end of a half torus with small radius $r$ and large radius $R^{*}$ to the end of this cylinder. Attach another cylinder extending to $x_{1}=0$ to its other end, and  reflect this surface about $\{ x_{1} = 0 \}$. 

The resulting surface must not be outward minimizing, since the original spheres were not outward minimizing and, by Proposition 4, the new surface does not touch the $H=0$ part of the original hull $E$.

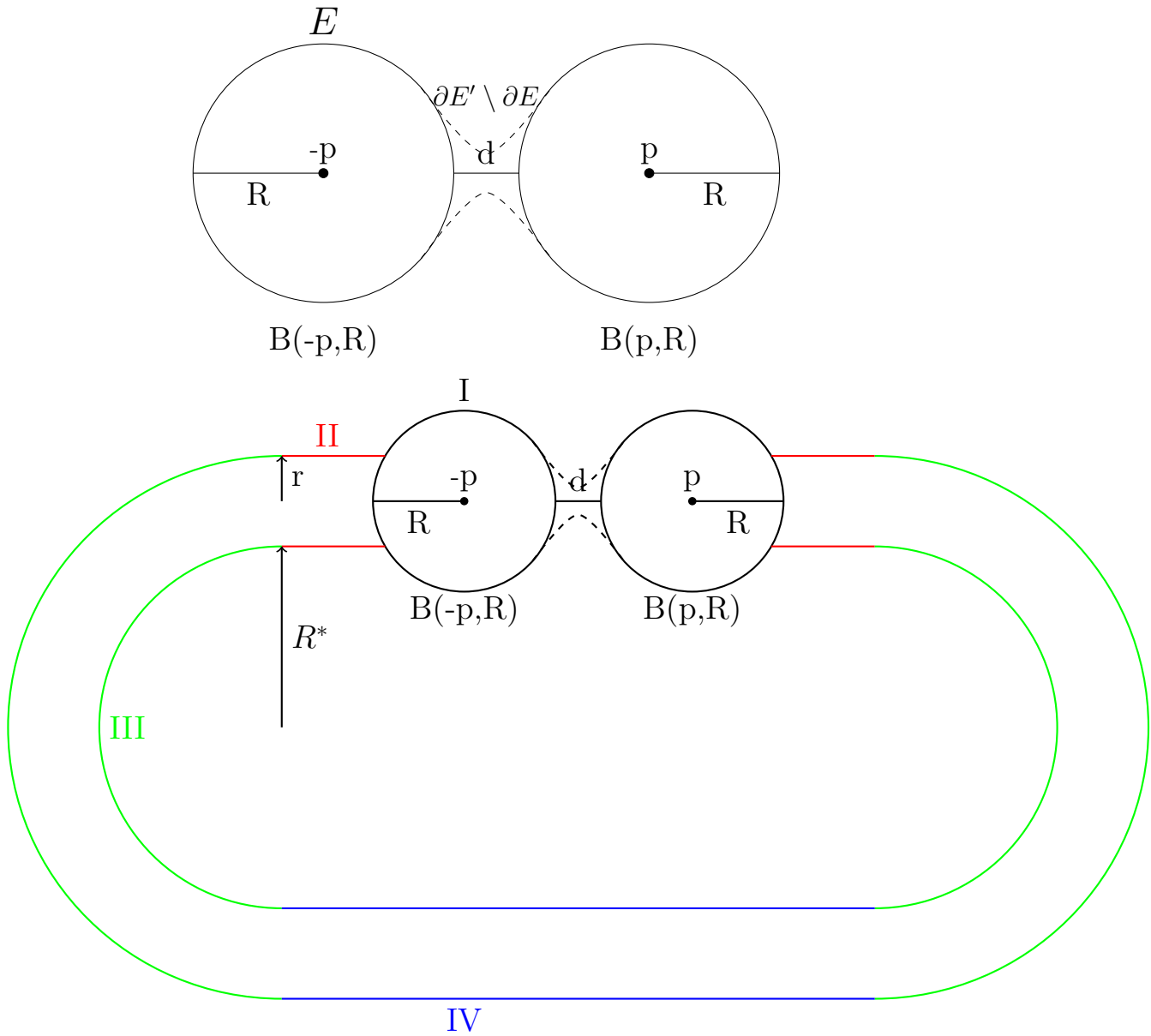
\begin{figure}

\centering
   \begin{tikzpicture}
\draw (0,0) circle(2);
\draw (0,2) node[anchor=south]{\Large{$E$}};
\draw (2.5,0.8) node[anchor=south]{$\partial E' \setminus \partial E$};
\filldraw[black] (0,0) circle(2pt) node[anchor=south] {-\large{p}};
\draw (5,0) circle(2);
\draw (-2,0) -- (0,0);
\draw (-1,0) node[anchor=north] {\large{R}};
\filldraw[black] (5,0) circle(2pt) node[anchor=south] {\large{p}};
\draw (5,0) -- (7,0);
\draw (6,0) node[anchor=north] {\large{R}};
\draw (0,-3) node[anchor=south] {\large{B(-p,R)}};
\draw (5,-3) node[anchor=south] {\large{B(p,R)}};
\draw (2,0) -- (3,0);
\draw (2.5,0) node[anchor=south] {\large{d}};

\tikzset{
    show curve controls/.style={
        decoration={
            show path construction,
            curveto code={
                \draw [blue, dashed]
                    (\tikzinputsegmentfirst) -- (\tikzinputsegmentsupporta)
                    node [at end, cross out, draw, solid, red, inner sep=2pt]{};
                \draw [blue, dashed]
                    (\tikzinputsegmentsupportb) -- (\tikzinputsegmentlast)
                    node [at start, cross out, draw, solid, red, inner sep=2pt]{};
            }
        }, decorate
    }
}

\draw [dashed] plot [smooth] coordinates {(1.5,1.3223) (2.5,0.3) (3.5,1.3222)};

\draw [dashed] plot [smooth] coordinates {(1.5,-1.3222) (2.5,-0.3) (3.5,-1.3222)};

\end{tikzpicture}

\centering
\begin{tikzpicture}[thick,scale=0.7, every node/.style={scale=1.0}]
\begin{scope}[shift={(-100,0)}]
\draw (0,0) circle(2);
\filldraw[black] (0,0) circle(2pt) node[anchor=south] {-\large{p}};
\draw (5,0) circle(2);
\draw (-2,0) -- (0,0);
\draw (-1,0) node[anchor=north] {\large{R}};
\filldraw[black] (5,0) circle(2pt) node[anchor=south] {\large{p}};
\draw (5,0) -- (7,0);
\draw (6,0) node[anchor=north] {\large{R}};
\draw (0,-3) node[anchor=south] {\large{B(-p,R)}};
\draw (5,-3) node[anchor=south] {\large{B(p,R)}};
\draw (2,0) -- (3,0);
\draw (2.5,0) node[anchor=south] {\large{d}};

\tikzset{
    show curve controls/.style={
        decoration={
            show path construction,
            curveto code={
                \draw [blue, dashed]
                    (\tikzinputsegmentfirst) -- (\tikzinputsegmentsupporta)
                    node [at end, cross out, draw, solid, red, inner sep=2pt]{};
                \draw [blue, dashed]
                    (\tikzinputsegmentsupportb) -- (\tikzinputsegmentlast)
                    node [at start, cross out, draw, solid, red, inner sep=2pt]{};
            }
        }, decorate
    }
}

\draw [dashed] plot [smooth] coordinates {(1.5,1.3223) (2.5,0.3) (3.5,1.3222)};

\draw [dashed] plot [smooth] coordinates {(1.5,-1.3222) (2.5,-0.3) (3.5,-1.3222)};

\draw [red] (-4, 1) -- (-1.732, 1);

\draw [red] (-4, -1) -- (-1.732, -1);

\draw [red] (9, 1) -- (6.732, 1);

\draw[red] (9, -1) -- (6.732, -1);

\draw[green] (-4,-1) arc(90:270:4);

\draw[green] (-4,1) arc(90:270:6);

\draw[green] (9,1) arc(90:-90:6);

\draw[green] (9,-1) arc(90:-90:4);

\draw[blue] (-4,-9) -- (9, -9);

\draw[blue] (-4,-11) -- (9,-11);

\draw (0,2) node[anchor=south] {\large{I}};

\draw (-3, 1) node[anchor=south] {\large{\color{red}{II}}};

\draw (-8, -5) node[anchor=west] {\large{\color{green}{III}}};

\draw(0, -11) node [anchor=north]
{\large{\color{blue}{IV}}};

\draw [->] (-4,-5) -- (-4, -1);

\draw(-4,-3) node[anchor=west] {\large{$ R^{*} $}};
\draw [->] (-4,0) -- (-4,1);

\draw (-4, 0.5) node[anchor=west] {\large{r}};
\end{scope}
\end{tikzpicture}
\caption{Given sufficiently close disjoint balls, one can attach handles and glue them together so that the resulting $C^{2}, H>0$ surface is not strictly outward minimizing.}
\end{figure}

It remains to show that one may refine this surface to an $H>0$ surface which is of class $C^{2}$. We require one additional lemma for this purpose.

\begin{lemma}\label{glue}
Let $U$ be any open subset of $\mathbb{R}$ containing $0$. Let $f: U \rightarrow \mathbb{R}$ be any function of the form

\begin{equation}f(x)= \begin{cases} 0 & x \leq 0 \\
g(x) & x>0
\end{cases}
\end{equation}
for some $g: U \cap \{ x>0 \} \rightarrow \mathbb{R}$. Then for every $0<\epsilon < \text{dist} \{0, \partial U\}$ there exists a function $p: (0, \epsilon) \rightarrow \mathbb{R}$ so that the the function

\begin{equation}\tilde{f}(x)= \begin{cases} 0 & x \leq 0 \\
p(x) & 0< x < \epsilon \\
g(x) & x \geq \epsilon
\end{cases}
\end{equation}

is in $C^{2}(U)$.
\end{lemma}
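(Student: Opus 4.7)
The plan is a straightforward Hermite interpolation. For $\tilde{f} \in C^2(U)$, the interpolating function $p$ on $(0,\epsilon)$ must be $C^2$ on the open interval and satisfy the six one-sided matching conditions
\begin{equation*}
\lim_{x \to 0^+} p^{(k)}(x) = 0, \qquad \lim_{x \to \epsilon^-} p^{(k)}(x) = g^{(k)}(\epsilon), \qquad k = 0, 1, 2.
\end{equation*}
(Implicitly $g$ must be $C^2$ near $\epsilon$ for the statement to make sense at all; in the intended application $g$ parametrizes a smooth piece of the glued surface, so this is automatic.) Six conditions match exactly the six degrees of freedom in a quintic polynomial, so I would look for $p$ in this class.

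Taking the ansatz $p(x) = x^3(b_0 + b_1 x + b_2 x^2)$ automatically forces $p$ to vanish to order three at $0$, handling all three conditions at the left endpoint. The three remaining conditions at $x = \epsilon$ then reduce to the linear system
\begin{equation*}
\begin{pmatrix} \epsilon^3 & \epsilon^4 & \epsilon^5 \\ 3\epsilon^2 & 4\epsilon^3 & 5\epsilon^4 \\ 6\epsilon & 12\epsilon^2 & 20\epsilon^3 \end{pmatrix} \begin{pmatrix} b_0 \\ b_1 \\ b_2 \end{pmatrix} = \begin{pmatrix} g(\epsilon) \\ g'(\epsilon) \\ g''(\epsilon) \end{pmatrix}.
\end{equation*}
Pulling $\epsilon^3, \epsilon^2, \epsilon$ out of the rows and $1, \epsilon, \epsilon^2$ out of the columns reduces the determinant to $\epsilon^9$ times a purely numerical $3 \times 3$ determinant, which a short cofactor expansion shows equals $2$. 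The system is therefore uniquely solvable, producing explicit coefficients $b_0, b_1, b_2$ in terms of $g(\epsilon)$, $g'(\epsilon)$, $g''(\epsilon)$, and $\epsilon$.

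It remains to verify that the resulting $\tilde{f}$ is $C^2$ on $U$. This is immediate on each of the three open pieces $U \cap (-\infty, 0)$, $(0, \epsilon)$, and $U \cap (\epsilon, \infty)$. At $x = 0$, the left and right $k$th derivatives all vanish for $k \leq 2$ by the factor $x^3$ in the ansatz; at $x = \epsilon$, the matching conditions force the left and right $k$th derivatives to agree for $k \leq 2$. Hence $\tilde{f} \in C^2(U)$. The only step requiring any care is verifying nonsingularity of the interpolation matrix, which as noted reduces to a single short determinant computation, so no real obstacle arises and the result is essentially an exercise in Hermite interpolation.
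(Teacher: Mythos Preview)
Your proposal is correct and follows essentially the same approach as the paper: both take the quintic ansatz $p(x)=Ax^3+Bx^4+Cx^5$ (your $b_0,b_1,b_2$ are the paper's $A,B,C$), note that this automatically kills the three conditions at $0$, set up the identical $3\times 3$ linear system at $x=\epsilon$, and verify its determinant equals $2\epsilon^9\neq 0$. The paper additionally writes out the explicit inverted formulas for $A,B,C$, which it needs later for the mean-curvature estimates, but for the lemma itself your argument is complete and matches the paper's.
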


\begin{proof}
Take the polynomial $p(x)=Ax^{3} + Bx^{4} + Cx^{5}$ for some constants $A,B,C \in \mathbb{R}$. Clearly $p(0)=p'(0)=p''(0)=0$. Furthermore, derivatives of $p$ are related to the coefficients $A$, $B$, and $C$ by 

\begin{equation}
    \begin{pmatrix} p(x) \\ p'(x) \\ p''(x)
    \end{pmatrix} = \begin{pmatrix} x^{3} & x^{4} & x^{5} \\ 3x^{2} & 4x^{3} & 5x^{4} \\ 6x & 12x^{2} & 20x^{3} \end{pmatrix} \begin{pmatrix} A \\ B \\ C \end{pmatrix}.
\end{equation}

One may readily compute for the above matrix $M$ that $\det{M}=2x^{9} \neq 0$ for any $x \neq 0$. This means that for any triple $(X,Y,Z) \in \mathbb{R}^{3}$ and any fixed point $x \neq 0$ we may select coefficients $A$, $B$, and $C$ so that $(p(x), p'(x), p''(x))=(X,Y,Z)$. In fact, inverting the above matrix reveals that for a given $(g(\epsilon), g'(\epsilon), g''(\epsilon)) \in \mathbb{R}^{3}$

\begin{eqnarray}
    A &=& \frac{10}{\epsilon^{3}} g(\epsilon) - \frac{4}{\epsilon^{2}} g'(\epsilon) + \frac{1}{2\epsilon} g''(\epsilon) \nonumber \\
    B &=& -\frac{15}{\epsilon^{4}} g(\epsilon) + \frac{7}{\epsilon^{3}} g'(\epsilon) - \frac{1}{\epsilon^{2}} g''(\epsilon) \\
    C &=& \frac{6}{\epsilon^{5}} g(\epsilon) - \frac{3}{\epsilon^{4}} g'(\epsilon) + \frac{1}{2\epsilon^{3}} g''(\epsilon). \nonumber
\end{eqnarray}

Then restricting the domain of this $p$ to $(0,\epsilon)$, the first two derivatives of the function

\begin{equation}\tilde{f}(x)= \begin{cases} 0 & x \leq 0 \\
p(x) & 0< x < \epsilon \\
g(x) & x \geq \epsilon
\end{cases}
\end{equation}

are everywhere continuous.
\end{proof}

Now, we must establish $C^{2}$ regularity at the overlap between regions I and II, II and III, and III and IV (See figure).

\textit{Regions I-II} The union of these regions is a surface of revolution about the $x_{1}$ axis and is therefore given by a graph in the $x_{1}$ coordinate. Choose $0$ to be the point on the $x_{1}$ axis corresponding to the equator of the sphere, and let $\epsilon= \sqrt{R^{2} - r^{2}}$. Then this graph is explicitly $g(x)= f(x) + r$, where

\begin{equation}
    f(x)= \begin{cases} 0 & x \leq 0 \\
    \sqrt{R^{2} - x^{2}} - r & x > 0
    \end{cases}.
\end{equation}

Now apply Lemma 1 for this $f$ and $\epsilon$. The resulting function is $C^{2}$, and it remains only to show that the corresponding surface of revolution will be mean convex if the tube radius $r$ is sufficiently close to the sphere radius $R$. Explicitly computing the interpolating polynomial $p(x)$ by inverting the matrix in the proof of Lemma 1, we find

\begin{eqnarray}
A &=& \frac{4}{\epsilon} \frac{1}{r} - \frac{1}{2\epsilon} \frac{R^{2}}{r^{3}} \leq \frac{7}{2\epsilon} \frac{1}{r}  \nonumber \\
B &=& \frac{-7}{\epsilon^{2}} \frac{1}{r} + \frac{1}{\epsilon^{2}} \frac{R^{2}}{r^{3}} = \frac{1}{r^{3}} - \frac{6}{\epsilon^{2} r} \\
C &=& \frac{3}{\epsilon^{3}} \frac{1}{r} - \frac{1}{2\epsilon^{3}} \frac{R^{2}}{r^{3}} \leq \frac{5}{2 \epsilon^{3}} \frac{1}{r}. \nonumber
\end{eqnarray}

In particular, for every $x \in (0,\epsilon)$ $p(x)=r + Ax^{3} + Bx^{4} + Cx^{5}$ and $p''(x)=6Ax + 12Bx^{2} + 20Cx^{3}$ obey the estimates

\begin{eqnarray}
   p(x) & \leq & r + A \epsilon^{3} + B \epsilon^{4} + C \epsilon^{5} = r + \frac{\epsilon^{2}}{r^{3}} \\
   p''(x) & \leq & 6A \epsilon + 12B \epsilon^{2} + 20C \epsilon^{3} \leq \frac{12}{r^{3}} \epsilon^{2}.
\end{eqnarray}

Then choosing $\epsilon$ small enough to ensure the $H=0$ part of the original minimizing hull strictly lies in the region $\{ x_{1} > \epsilon \}$ and that $p(x)p''(x) < 1$, the $C^{2}$ surface of revolution is not outward minimizing and has $H= \frac{1}{(1+f'(x)^{2})^{\frac{3}{2}}} (1 + f'(x)^{2} - f(x) f''(x)) >0$. \\

\textit{Regions II-III/III-IV:} One may apply an identical gluing construction to each of these overlap regions, so we only present the construction for Regions III-IV here. The union of regions III and IV corresponds to a curve which is the union of a semicircle of a line. Parametrizing the lower half of the semicircle and the line as 

\begin{equation}
    g(x_{1})= \begin{cases} 0 & x<0 \\ -\sqrt{(R^{*})^{2} - x^{2}} + R^{*} & x \geq 0 \end{cases}.
\end{equation}

Here we chose the origin to be the point where the arc meets the line. For some sufficiently small $\epsilon>0$, we apply Lemma 1. It remains only to show that the surface obtained by taking a circle of radius $r$ in each plane normal to the curve at each point is mean convex for $R^{*}$ sufficiently large. From Lemma 1, the interpolating polynomial $p(x)$ has second derivative given by

\begin{eqnarray}
    p''(x)&=& 6(\frac{10}{\epsilon^{3}}(-\sqrt{(R^{*})^{2} - \epsilon^{2}} + R^{*}) - \frac{4}{\epsilon^{2}}(\frac{\epsilon}{((R^{*})^{2}-\epsilon^{2})^{\frac{3}{2}}}) + \frac{1}{2\epsilon} \frac{(R^{*})^{2}}{((R^{*})^{2} - \epsilon^{2})^{\frac{3}{2}}})x  \nonumber \\ & & + 12(\frac{-15}{\epsilon^{4}}(-\sqrt{(R^{*})^{2} - \epsilon^{2}} + R^{*}) + \frac{7}{\epsilon^{3}}(\frac{\epsilon}{((R^{*})^{2}-\epsilon^{2})^{\frac{3}{2}}}) + \frac{-1}{\epsilon^{2}} \frac{(R^{*})^{2}}{((R^{*})^{2} - \epsilon^{2})^{\frac{3}{2}}})x^{2} \nonumber \\
    & & + 20(\frac{6}{\epsilon^{5}}(-\sqrt{(R^{*})^{2} - \epsilon^{2}} + R^{*}) - \frac{3}{\epsilon^{4}}(\frac{\epsilon}{((R^{*})^{2}-\epsilon^{2})^{\frac{3}{2}}}) + \frac{1}{2\epsilon^{3}} \frac{(R^{*})^{2}}{((R^{*})^{2} - \epsilon^{2})^{\frac{3}{2}}})x^{3}. \nonumber
\end{eqnarray}

Since $0 < x < \epsilon$, we have

\begin{eqnarray}
p''(x) &\leq& \frac{180}{\epsilon^{2}} (-\sqrt{(R^{*})^{2}-\epsilon^{2}} + R^{*}) + 13 \frac{(R^{*})^{2}}{((R^{*})^{2}-\epsilon^{2})^{\frac{3}{2}}} +  \frac{84}{((R^{*})^{2}-\epsilon^{2})^{\frac{3}{2}}}.
\end{eqnarray}

For a fixed $\epsilon$ each of these terms can be made arbitrarily small by choosing $R^{*}$ large enough to guarantee that $p''(x) \leq \frac{1}{r}$ for every $x \in [0, \epsilon)$, where $\frac{1}{r}$ is the curvature of the surface in a direction orthogonal to the graph. This in turn guarantees that $H>0$ in this region, so the entire surface is $C^{2}$ and mean convex. 

\begin{remark}
Since the spheres in this construction can be chosen to be arbitrarily close to one another without changing the initial flow speed at the closest points, we suspect that this surface develops an intersection rather than a singularity first.
\end{remark}
\printbibliography[title={References}]

@article{gerhardt,
  author=        "Claus Gerhardt",
  title=         "{Inverse Curvature Flows in Hyperbolic Space}",
  journal =      "J. Differential Geometry",
  volume =       "89",
  number =       "3",
  pages =        "487--527",
  year =         "2011",
  DOI =          "doi:10.4310/jdg/1335207376"
}

@article{huisken,
    author    = "Gerhard Huisken and Tom Illamen",
    title=      "{The Inverse Mean Curvature Flow and the Riemannian Penrose Inequality}",
   journal =      "J. Differential Geometry",
  volume =       "59",
  number =       "3",
  pages =        "353 -- 437",
  year =         "2001",
  DOI =          "doi:10.4310/jdg/1090349447",
}

@article{huisken2,
author = {Huisken, Gerhard and Ilmanen, Tom},
year = {2008},
month = {11},
pages = {},
title = {Higher regularity of the inverse mean curvature flow},
volume = {80},
journal = {Journal of Differential Geometry - J DIFFEREN GEOM},
doi = {10.4310/jdg/1226090483}
}

@article{chow,
author = {Chow, Bennett and Gulliver, Robert},
year = {2001},
month = {4},
pages = {261-280},
title = {Aleksandrov Reflection and the Geometric Evolution of Hypersurfaces},
volume = {9},
journal = {Communications in Analysis and Geometry},
doi = {10.4310/CAG.2001.v9.n2.a2}
}

@inproceedings{Marquardt2012TheIM,
  title={The inverse mean curvature flow for hypersurfaces with boundary},
  author={Thomas Marquardt},
  year={2012}
}

@inproceedings{Bray2004ClassificationOP,
  title={Classification of prime 3-manifolds with $\sigma$-invariant greater than $\mathbb{RP}^3$},
  author={Bray, Hubert and Neves, Andre},
  year={2004},
  pages= {407-424},
  volume=159,
  journal={Annals of Mathematics},
  doi={10.4007/annals.2004.159.407}
}

@article{brendle,
author = {Brendle, Simon and Hung, Pei-Ken and Wang, Mu-Tao},
year = {2014},
month = {12},
pages = {},
title = {A Minkowski Inequality for Hypersurfaces in the Anti-de Sitter-Schwarzschild Manifold},
volume = {69},
journal = {Communications on Pure and Applied Mathematics},
doi = {10.1002/cpa.21556}
}

@article{wei,
author = {Wei, Yong},
year = {2018},
month = {03},
pages = {},
title = {On the Minkowski-type inequality for outward minimizing hypersurfaces in Schwarzschild space},
volume = {57},
journal = {Calculus of Variations and Partial Differential Equations},
doi = {10.1007/s00526-018-1342-x}
}

@article{mccormick,
author = {McCormick, Stephen},
year = {2017},
month = {09},
pages = {},
title = {On a Minkowski-like inequality for asymptotically flat static manifolds},
volume = {146},
journal = {Proceedings of the American Mathematical Society},
doi = {10.1090/proc/14047}
}

@misc{huisken3, 
author = {Huisken, Gerhard},
title = {Marston Morse-- Inverse Mean Curvature Flow and Isoperimetric Inequalities},
howpublished = "\url{https://www.youtube.com/watch?v=Qt09iMPUcYY}",
year = {2009}, 
note = "Accessed 06/26/19",
}

@article{mattia,
  title={Minkowski Inequalities via Nonlinear Potential Theory},
  author={Virginia Agostiniani and Mattia Fogagnolo and Lorenzo Mazzieri},
  journal={arXiv: Analysis of PDEs},
  year={2019}
}

@misc{carlos,
author={Carlo Mantegazza},
title={Lecture Notes on Mean Curvature Flow},
year={2011},
publisher={Progress in Mathematics},
volume={290}
}

\begin{center}
\textnormal{ \large Department of Mathematics,
University of California, Davis \\
Davis, CA 95616\\
e-mail: bharvie@math.ucdavis.edu}\\
\end{center}
\end{normalsize}
\end{document}